\newcommand{\be}{\begin{equation}} 
\newcommand{\bel}{\begin{eqnarray}}
\newcommand{\ee}{\end{equation}} 
\newcommand{\eel}{\end{eqnarray}}
\newcommand{\ba}{\begin{aligned}}
\newcommand{\ea}{\end{aligned}}
\newcommand{\rev}[1]{#1} 
\numberwithin{equation}{section}
\newtheorem{theorem}{Theorem}[section]
\newtheorem{lemma}{Lemma}[section]
\newtheorem{proposition}[theorem]{Proposition}
\newtheorem{remark}{Remark}[section]
\renewcommand{\geq}{\geqslant}
\renewcommand{\ge}{\geqslant}
\renewcommand{\leq}{\leqslant}
\renewcommand{\le}{\leqslant}
\newcommand{\rd}{\,\mathrm{d}}
\newcommand{\dx}{\rd{\bx}}
\newcommand{\dy}{\rd{\by}}
\def\bu{{\bf u}}
\def\bx{{\bf x}}
\def\by{{\bf y}}
\def\cW{\mathscr{W}}
\def\rW{{W}}
\def\cw{{w}}
\def\rw{{\sf{w}}}
\def\cN{\mathscr{N}}
\def\rN{{N}}
\def\cV{\mathscr{V}}
\def\rV{{V}}
\def\cD{\mathscr{D}}
\def\tV{\widetilde{\cV}}
\def\supp{\textnormal{supp\,}}
\def\ddt{\frac{\rd}{\rd{t}}}
\def\BR{{\mathbb B}_R}
\def\BRzero{{\mathbb B}_{R_0}}
\def\B1{{\mathbb B}_1}
\def\BRinf{{\mathbb B}_{R_\infty}}
\def\argmin{\textnormal{argmin}}
\DeclareRobustCommand{\rchi}{{\mathpalette\irchi\relax}}
\newcommand{\irchi}[2]{\raisebox{\depth}{$#1\chi$}} 
\def\hf{\frac{1}{2}}
\begin{document}

\title[Newtonian repulsion and radial confinement]
{Newtonian repulsion and radial confinement: convergence towards steady state}

\author{Ruiwen Shu}
\address{Department of Mathematics, Center for Scientific Computation and Mathematical Modeling (CSCAMM)\newline
University of Maryland, College Park MD 20742}
\email{rshu@cscamm.umd.edu}

\author{Eitan Tadmor}
\address{Department of Mathematics, Center for Scientific Computation and Mathematical Modeling (CSCAMM), and Institute for Physical Sciences \& Technology (IPST)\newline
University of Maryland, College Park MD 20742}
\email{tadmor@umd.edu}

\subjclass{92D25, 35Q35, 76N10}

\keywords{aggregation equation, Newtonian repulsion, attraction, radial confinement, steady state.}

\thanks{\textbf{Acknowledgment.} Research was supported in part by NSF and ONR grants DMS16-13911and N00014-1812465.}
\date{February 11, 2021}

\begin{abstract}
We investigate the large time behavior of multi-dimensional aggregation equations driven by Newtonian repulsion, and balanced by radial attraction and confinement.
In case of  Newton repulsion with radial confinement we  quantify the algebraic convergence decay rate towards the unique steady  state. To this end, we identify a one-parameter family of radial steady states, and  prove dimension-dependent  decay rate in energy and 2-Wassertein distance, using a comparison with properly selected radial steady states.  We also study Newtonian repulsion and  radial attraction.  When the attraction potential is quadratic it is known to coincide with  quadratic confinement. Here we 
study the case of perturbed radial quadratic attraction, proving  that it still leads to one-parameter family of unique steady states. It is expected that this family to serve for a corresponding comparison argument which yields   algebraic convergence towards  steady repulsive-attractive solutions. 
\end{abstract}

\maketitle
\tableofcontents

\section{Introduction}

In this paper we study the large time behavior of the  first-order aggregation equation
\begin{equation}\label{eq0}
\partial_t \rho + \nabla\cdot(\rho \bu) = 0, \qquad \bu(t,\bx)=-\nabla\Phi(t,\bx),
\end{equation}
subject to prescribed initial distribution, $\rho(0,\bx)=\rho_0(\bx)$, with  mass  
\begin{equation}
m_0 = \int \rho_0(\bx)\dx = \int \rho(t,\bx)\dx > 0, \quad \forall t>0.
\end{equation}
 
The dynamics we have in mind for \eqref{eq0} governs the interaction of infinitesimal mass elements, $\rho(t,\bx)\dx$,   which are dominated by repulsion near in the immediate neighborhood of $\bx\in {\mathbb R}^d$ and  balanced by attraction and confinement  which dominate away from $\bx$. This reflects     ``social'' interactions   encountered in applications --- describing collective dynamics in ecology, human interactions or sensor-based crowds, \cite{CMV03,CMV06,FHK11,KSUB11,BCLR13,BCY14,CFT15, CFP17}, ... .
 In this paper, we consider the case of Newtonian repulsion $\nabla (-\Delta)^{-1}\rho(t,\bx)$ coupled with attraction $\nabla \cW*\rho(t,\bx)$  and confinement $\nabla \cV(\bx)$,
\begin{equation}\label{whatisPhi}
\bu(t,\bx) = -\nabla \Phi(t,\bx), \quad \Phi(t,\bx):=\int \cN(\bx-\by)\rho(t,\by)\dy +\int  \cW(\bx-\by)\rho(t,\by)\dy + \cV(\bx).
\end{equation}
Here, $\rho(t,\bx)\ge 0$ is the large crowd density distribution  of ``agents'', varying in time-space $(t,\bx) \in(\mathbb{R}_{+}\times\mathbb{R}^d)$, $\cN$ is the Newtonian potential satisfying $\Delta \cN = -\delta$, 
\begin{equation}
\cN(\bx) = \left\{\begin{split}
& - \frac{1}{2}|\bx|,\qquad \qquad d=1 \\
& -\frac{1}{2\pi}\log|\bx|,\ \ \quad d=2 \\
& \frac{c_d}{|\bx|^{d-2}},\qquad \qquad \ c_d>0, d\ge 3 \\
\end{split}\right.
\end{equation}
and $\cV(\bx)=\rV(r)$ and $\cW(\bx) = \rW(r),\,r = |\bx|,\,\rev{\rW'(r)\ge 0}$ are confining external potential and, respectively, a pairwise attraction potential, both are assumed radial, \rev{smooth} and with Pareto tail at infinity 
\begin{equation}\label{eq:pareto}
\lim_{r\rightarrow \infty}\rV'(r) r^{d-1} =\infty,
\end{equation}
so that the external potential  (--- and likewise, the pairwise interaction potential) dominates the Newtonian repulsion at infinity, $\lim_{R\rightarrow \infty}\rV(R)/\rN(R) =\infty$.

This paper is concerned with the large time behavior of the aggregation equation \eqref{eq0}, when  Newtonian repulsion is balanced by the presence of either confinement or attraction induced by  a potential, $\cV$, or respectability, $\cW$. Observe that a steady state  of \eqref{eq0}, $\rho_\infty$, is characterized\footnote{A steady solution of \eqref{eq0}, $\nabla\cdot(\rho_\infty\nabla\Phi_\infty)=0$, implies $\displaystyle \int \rho_\infty|\nabla\Phi_\infty|^2\dx=0$, i.e., $\bu_\infty$ vanishes on $\supp\rho_\infty$ in agreement with \eqref{dE} below.} by a  velocity field which vanishes on the support of $\rho$, i.e.,
\begin{equation}\label{rhoinfty}
-\int \nabla \cN(\bx-\by)\rho_\infty(\by)\rd{\by} -\int \nabla \cW(\bx-\by)\rho_\infty(\by)\rd{\by} -\nabla \cV(\bx) = 0,\quad \forall \bx\in\supp\rho_\infty.
\end{equation}
Taking divergence, then \eqref{rhoinfty} implies
\begin{equation}\label{rhosuppeq}
\rho_\infty(\bx) = \int \Delta \cW(\bx-\by)\rho_\infty(\by)\rd{\by} +\Delta \cV(\bx),\quad \forall \bx\in\supp\rho_\infty,
\end{equation}
which appears to be a key property of steady states.
The set of steady states is not empty: indeed, \eqref{eq0} is the 2-Wasserstein gradient flow of the total energy
\[
E[\rho] = \frac{1}{2}\iint \cN(\bx-\by) \rho(\by)\rho(\bx)\rd{\by}\rd{\bx} + \frac{1}{2}\iint \cW(\bx-\by) \rho(\by)\rho(\bx)\rd{\by}\rd{\bx} + \int \cV(\bx)\rho(\bx)\rd{\bx},
\]
i.e., its solution $\rho(t,\bx)$ satisfies the energy dissipation law
\begin{equation}\label{dE}
\ddt E(t) = -\int |\bu(t,\bx)|^2\rho(t,\bx)\rd{\bx}:=-\cD[\rho(t,\cdot)],\quad E(t) := E[\rho(t,\cdot)].
\end{equation}
By compactness arguments $E[\rho]$ admits a global energy minimizer,  
$\{\rho_\infty: \ \cD[\rho_\infty]=0\}$, which is a steady state of \eqref{eq}. The main question,  therefore, is whether the steady state $\rho_\infty$ is unique, and whether the solution $\rho(t,\cdot)$ converges to $\rho_\infty$ as $t\rightarrow\infty$.

\section{Main results}

We will use $C$ and $c$ to denote positive constants, being large and small respectively, which may depend on $\cV$, $\cW$, and $\rho_0$, but otherwise, are independent of the other parameters; their specific values may change from one equation to the next.
For notation simplicity, we will assume $d\ge 2$ in the rest of this paper. The counterparts of all results for $d=1$ are rather straightforward, and outlined in the Appendix. $\BR$ denotes the $d$-dimensional ball $\BR=\{\bx\,:\, |\bx|\leq R\}$.

\subsection{Newtonian repulsion with external confining potential}

We first present the results for \eqref{eq0} with $\cW=0$, i.e., the model with Newtonian repulsion and external confining potential
\begin{equation}\label{eq}
\partial_t \rho + \nabla\cdot(\rho \bu) = 0,\quad \bu(t,\bx) = -\int \nabla \cN(\bx-\by)\rho(t,\by)\rd{\by}  -\nabla \cV(\bx).
\end{equation}
The repulsion-confinement \rev{equation} \eqref{eq} is the gradient flow of the \rev{corresponding} energy dissipation law
\begin{equation}\label{eq:eqEV}
E[\rho] = \frac{1}{2}\iint \cN(\bx-\by) \rho(\by)\rho(\bx)\rd{\by}\rd{\bx} +  \int \cV(\bx)\rho(\bx)\rd{\bx}.
\end{equation}
We note in passing that  at least formally, \eqref{eq0} is a  2-Wasserstein gradient flow of the total energy $E[\rho]$; consult \cite{CMV03,CMV06,CDFLS11,Cra17} for a rigorous derivation.

\medskip\noindent
{\bf Existence of global minimizer}. 
\rev{Our first result, summarized in Theorem \ref{thm_exist} below, proves the existence of 
\emph{compactly supported, global} energy minimizer of the repulsion-confinement energy functional \eqref{eq:eqEV}.
}
\begin{theorem}\label{thm_exist}
\rev{Consider the $d$-dimensional energy \eqref{eq:eqEV}, $d\ge 3$, with  $C^2$-potential $\cV$ such that $\displaystyle \lim_{|\bx|\rightarrow\infty} \cV(\bx) = \infty$. Given arbitrary $m_0>0$, it admits  a compactly supported global minimizer $\displaystyle \rho_\infty=\mathop{\argmin}_{\rho\in S} E(\rho)$ in
$\displaystyle S:=\big\{\rho\in L^1: \rho\ge 0,\,\int \rho\rd{\bx}=m_0\big\}$.}

\end{theorem}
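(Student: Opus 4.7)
The plan is the direct method of the calculus of variations, followed by Euler--Lagrange analysis. Set $c_V:=\inf_{\rr^d}\cV>-\infty$ (finite by continuity of $\cV$ and $\cV\to\infty$ at infinity); since for $d\ge 3$ the Newtonian kernel $\cN(\bx)=c_d|\bx|^{2-d}$ is strictly positive, $E[\rho]\ge c_V m_0$ on $S$, so $E$ is bounded below. Pick a minimizing sequence $\{\rho_n\}\subset S$. Uniform tightness follows from
\[
\bigl(\inf_{|\bx|>R}\cV - c_V\bigr)\int_{|\bx|>R}\rho_n\rd\bx \;\le\; \int(\cV-c_V)\rho_n\rd\bx \;\le\; E[\rho_n] - c_V m_0 \;\le\; C,
\]
combined with $\inf_{|\bx|>R}\cV\to\infty$ as $R\to\infty$. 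Prokhorov extracts a vaguely convergent subsequence whose limit is a nonnegative Radon measure $\mu_\infty$ of mass $m_0$. Since $\cV-c_V\ge 0$ and $\cN\ge 0$ are both lsc (the kernel being continuous off the diagonal and $+\infty$ on it), standard lower semicontinuity results against vaguely convergent measures give $E[\mu_\infty]\le\liminf_n E[\rho_n]=\inf_S E$, so $\mu_\infty$ minimizes $E$ among nonnegative Radon measures of mass $m_0$.

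Next, the Euler--Lagrange condition obtained from mass-preserving perturbations $\mu_\infty+\epsilon\nu\ge 0$ yields $\lambda\in\rr$ with
\[
U(\bx):=\cN*\mu_\infty(\bx)+\cV(\bx)=\lambda \ \text{on}\ \supp\mu_\infty, \qquad U(\bx)\ge\lambda \ \text{elsewhere}.
\]
For $d\ge 3$ the Riesz potential $\cN*\mu_\infty$ decays to $0$ at infinity (by dominated convergence and the $|\bx|^{2-d}$ decay), while $\cV\to\infty$; hence $U\to\infty$ at infinity, so $\{U\le\lambda\}\supset\supp\mu_\infty$ is bounded and the support is compact. On the interior $\Omega$ of $\supp\mu_\infty$, $U\equiv\lambda$ together with $-\Delta\cN=\delta$ gives the distributional identity $\mu_\infty=\Delta\cV$, a continuous density (since $\cV\in C^2$). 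A Lipschitz cut-off test centered near $\partial\Omega$ then rules out a singular part, so $\mu_\infty=\rho_\infty\rd\bx$ with $\rho_\infty\in L^1\cap L^\infty_{\rm loc}$, the desired compactly supported minimizer in $S$.

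I expect the main technical obstacle to lie in this last $L^1$-regularity step: promoting the a priori measure-valued minimizer to an honest integrable density. The formal identification $\rho_\infty=\Delta\cV$ on the coincidence set is clean, but excluding singular mass concentrated on the (generally irregular) boundary of the coincidence set is the classical obstacle-problem issue. It can be handled by the maximum principle for the Newtonian potential together with the saturation of the constraint $U\ge\lambda$ at $\partial\Omega$; an alternative is to exploit the strict convexity of the Coulomb interaction energy for $d\ge 3$ (positive-definiteness of the Riesz bilinear form on finite-mass signed measures) to obtain uniqueness of the minimizer first, and then read off the density from the Euler--Lagrange identity.
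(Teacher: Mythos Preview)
Your approach is correct in outline but follows a genuinely different route from the paper's. You work in the space of measures: tightness, Prokhorov, lower semicontinuity, then Euler--Lagrange to extract the obstacle-problem structure $U=\lambda$ on the support, and finally regularity of the equilibrium measure to recover an $L^1$ density. The paper instead never leaves $L^1\cap L^\infty$: it minimizes over the truncated classes $S^M=\{\rho\in L^1\cap L^\infty:\|\rho\|_{L^\infty}\le M\}$, proves a localization lemma (any $\rho\in S^M$ with controlled energy can be replaced by a competitor supported in a fixed ball $\BR$ with $R$ independent of $M$), and then extracts a weak-$L^p$ limit $\rho_\infty^M$. The key step---and the one that replaces your obstacle-problem regularity---is dynamical: if $\|\rho_\infty^M\|_{L^\infty}$ exceeded $\|\Delta\cV\|_{L^\infty(\BR)}$, then $\rho_\infty^M$ could not be a steady state of the gradient flow \eqref{eq}, and running the flow forward would produce a strictly lower-energy competitor still in $S^M$. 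This pins $\|\rho_\infty^M\|_{L^\infty}\le M_\cV$ uniformly in $M$, and a truncation argument upgrades minimality from $S^M$ to all of $S$.

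What each approach buys: yours is the classical potential-theory route and, once completed, would hand you the Euler--Lagrange characterization and compact support in one stroke. The paper's approach sidesteps precisely the step you flagged as the obstacle---ruling out singular mass on the free boundary---by exploiting the PDE structure (the $L^\infty$ bound propagates along the flow, and steady states are forced to equal $\Delta\cV$ on their support). Your acknowledged gap is real: promoting the measure minimizer to an $L^1$ density via free-boundary regularity is doable with $\cV\in C^2$, but it is a nontrivial citation or argument, and your sketch (maximum principle plus saturation, or uniqueness via positive-definiteness of the Riesz form) would need to be made precise. The paper trades that regularity theory for a short dynamical contradiction.
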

Existence of minimizers for  energy functionals involving potentials
with a finite limit $\displaystyle \lim_{|\bx|\rightarrow\infty} \cV(\bx) = \cV_\infty$ goes back to P. L. Lions' original work on concentration-cancellation \cite[II.4]{Lio84}.
Related works on existence of minimizers for attraction-repulsion energy functionals using concentration compactness arguments can be found in \cite{SST15,CFT15,CCP15,BCT18} and using symmetry and symmetric rearrangement arguments in \cite{BG04,Lop19,FL19}.
Here we use compactness arguments to prove the existence of compactly supported global minimizer
in the admissible class  
\begin{equation}
S^M := \left\{\rho\in L^1\cap L^\infty: \rho\ge 0,\,\int \rho\rd{\bx}=m_0,\,\|\rho\|_{L^\infty} \le M\right\}.
\end{equation}
We first prepare the following comparison principle.
\begin{lemma}
Fix the constants $m_0, E_m$ and $\displaystyle M\ge \frac{2m_0}{|\B1|}$. For every $\rho \in S^M$  such that $E[\rho]\le E_m$, there exists $\rho_1\in S^M$  such that $E[\rho_1]\le E[\rho]$, with compact support $\supp\rho_1\subset \BR$,  for $R$ depending on $E_m$, $m_0$, $d$ and $\cV$ but independent of $M$.
\end{lemma}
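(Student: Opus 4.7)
The plan is to construct $\rho_1$ by truncating the outer tail $\sigma:=\rho\,\mathbf{1}_{\BR^c}$ (of mass $m_{\text{out}}:=\int\sigma\le m_0$) and redistributing it as a moderate, spread-out density $\psi$ supported in the fixed annulus $A:=\mathbb{B}_2\setminus\B1$ (so we fix $R\ge 2$). The hypothesis $M\ge 2m_0/|\B1|$ is what lets us choose $\psi$ uniformly in $M$: Chebyshev gives $|\{x:\rho(x)>M/2\}|\le 2m_0/M\le|\B1|$, so the ``good set'' $A_*:=\{x\in A:\rho(x)\le M/2\}$ has measure $|A_*|\ge(2^d-2)|\B1|\ge 2|\B1|$, independently of $M$. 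Setting $\psi:=(m_{\text{out}}/|A_*|)\mathbf{1}_{A_*}$ yields $\int\psi=m_{\text{out}}$ and $\psi\le m_0/(2|\B1|)\le M/4$, so $\rho_1:=\rho\,\mathbf{1}_{\BR}+\psi$ lies in $S^M$ (since $\rho+\psi\le M/2+M/4\le M$ on $A_*$) with $\supp\rho_1\subset\BR$ and $\int\rho_1=m_0$.

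With the symmetric bilinear form $B(f,g):=\iint\cN(x-y)f(x)g(y)\,dx\,dy$, I expand
\[
E[\rho_1]-E[\rho]=B(\rho,\psi-\sigma)+E_N[\psi-\sigma]+\int\cV\,(\psi-\sigma),
\]
and regroup the Newtonian contributions as
\[
B(\rho,\psi)+E_N[\psi]-B(\psi,\sigma)+\bigl[-B(\rho,\sigma)+E_N[\sigma]\bigr].
\]
Since $\rho\ge\sigma$ gives $B(\rho,\sigma)\ge B(\sigma,\sigma)=2E_N[\sigma]$, the bracketed term is $\le -E_N[\sigma]\le 0$; together with $B(\psi,\sigma)\ge 0$, only $B(\rho,\psi)+E_N[\psi]$ survive. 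This is the key cancellation: all $\sigma$-self and $\sigma$-$\rho$ Newtonian contributions drop out, despite $\rho$ not being truncated outside $\BR$ in the first step.

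To bound what remains uniformly in $M$, the rearrangement estimate $\sup_x\int_{A_*}|x-y|^{-(d-2)}\,dy\le K_d|A_*|^{2/d}$ combined with the choice of $\psi$ yields
\[
E_N[\psi]\le\tfrac{K_d}{2}\,m_{\text{out}}^2\,|A_*|^{2/d-1}\le C_d\,m_{\text{out}}^2,
\]
where the $M$-independent lower bound on $|A_*|$ is used (this is where the hypothesis on $M$ pays off). Because $\cN$ is positive definite for $d\ge 3$, Cauchy--Schwarz for $B$ gives $B(\rho,\psi)\le 2\sqrt{E_N[\rho]\,E_N[\psi]}\le 2m_{\text{out}}\sqrt{C_d(E_m+|\cV_{\min}|m_0)}$, where $E_N[\rho]\le E[\rho]-\cV_{\min}m_0$. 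For the confinement piece, $\int\cV\psi\le\rV(2)\,m_{\text{out}}$ and $\int\cV\sigma\ge\rV(R)\,m_{\text{out}}$ for $R$ past the (eventual) monotonicity threshold of $\rV$. Collecting,
\[
E[\rho_1]-E[\rho]\le m_{\text{out}}\Bigl[\,2\sqrt{C_d(E_m+|\cV_{\min}|m_0)}+C_d m_0+\rV(2)-\rV(R)\,\Bigr],
\]
which is $\le 0$ once $R$ is chosen so that $\rV(R)$ dominates the first three terms; such $R$ depends only on $E_m,m_0,d,\cV$.

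The main obstacle, and precisely the role of the hypothesis $M\ge 2m_0/|\B1|$, is the tension between the pointwise constraint $\rho+\psi\le M$ and the need to bound $E_N[\psi]$ independently of $M$: a sharp spike of height $\sim M$ would give $E_N[\psi]\sim M^{1-2/d}$ and defeat the estimate. Chebyshev's inequality confines the high-density region of $\rho$ to a set of volume $\le|\B1|$, which is strictly smaller than $|A|=(2^d-1)|\B1|$; this creates a reservoir of room in the fixed annulus where $\psi$ can be a moderate uniform density rather than a concentrated bump. The Cauchy--Schwarz inequality for the positive-definite Newton bilinear form is the second essential ingredient, trading the $M$-dependent pointwise values of $\Phi_\rho$ for the $M$-independent quantity $E_N[\rho]$.
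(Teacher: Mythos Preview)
Your proof is correct but proceeds quite differently from the paper's. The paper constructs $\rho_1$ by first \emph{scaling down} the truncation, setting
\[
\rho_1 := \frac{m_0-2\epsilon}{m_0-\epsilon}\,\rho\chi_{\BR} + \frac{2\epsilon}{|\B1|}\chi_{\B1},\qquad \epsilon:=\int_{\BR^c}\rho,
\]
so that the factor $\tfrac{m_0-2\epsilon}{m_0-\epsilon}\le 1-\tfrac{\epsilon}{m_0}$ frees up exactly $\epsilon M/m_0\ge 2\epsilon/|\B1|$ of headroom on all of $\B1$; the hypothesis $M\ge 2m_0/|\B1|$ is used here. The cross term is then controlled by the elementary bound $B(\rho,\psi)\le\|\cN*\chi_{\B1}\|_{L^\infty}\|\rho\|_{L^1}$, with no appeal to positive definiteness. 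Your construction instead leaves $\rho\chi_{\BR}$ unscaled and uses Chebyshev to locate a ``good'' subset $A_*$ of a fixed annulus where $\rho\le M/2$, placing the redistributed mass there; the hypothesis on $M$ enters as the volume bound $|\{\rho>M/2\}|\le|\B1|$. You then invoke Cauchy--Schwarz for the positive-definite Newton form to bound $B(\rho,\psi)$ via $E_N[\rho]$. The paper's route is shorter and more direct; your route avoids the rescaling trick and makes the role of positive definiteness explicit, though in fact your Cauchy--Schwarz step is not essential --- since $\|\psi\|_{L^\infty}\le m_0/(2|\B1|)$ and $\supp\psi\subset\mathbb{B}_2$ are $M$-independent, the paper's direct bound $B(\rho,\psi)\le\|\cN*\psi\|_{L^\infty}m_0$ would also work for your $\psi$. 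One small cosmetic point: you write $\rV(R)$ and invoke eventual monotonicity, but all that is needed (and all the lemma's hypotheses give) is $\min_{\BR^c}\cV\to\infty$; replace $\rV(R)$ by $\min_{\BR^c}\cV$ and the argument goes through verbatim.
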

\begin{proof}
Without loss of generality, assume $\min \cV = 0$. Let $R\ge 1$ be a large constant to be chosen. 

Set $\displaystyle \epsilon := \int_{\BR^c} \rho\rd{\bx}$. Since $\cN\ge 0$ for $d\ge 3$,
\begin{equation}
E[\rho\chi_{{}_{\BR}}] \le E[\rho] - \epsilon \min_{\bx\in \BR^c} \cV(\bx)
\end{equation}
We first choose $R$ large enough such that $\displaystyle \min_{\bx\in \BR^c} \cV(\bx) > \frac{2E_m}{m_0}$. This implies $\epsilon\le \hf m_0$ since $E[\rho\chi_{{}_{\BR}}]\ge 0$.
We now define $\rho_1$ 
\begin{equation}
\rho_1 :=\frac{m_0-2\epsilon}{m_0-\epsilon}\rho\chi_{{}_{\BR}} + \frac{2\epsilon}{|\B1|} \chi_{{}_{\B1}}.
\end{equation}
Then $\rho_1\ge 0, \ \displaystyle \int \rho_1\rd{\bx}=m_0$, and 
\[
\|\rho_1\|_{L^\infty} \le \frac{m_0-2\epsilon}{m_0-\epsilon}M + \frac{2\epsilon}{|\B1|}\le (1-\frac{\epsilon}{m_0})M + \frac{2\epsilon}{|\B1|} \le M.
\]
Thus, $\rho_1\in S^M$. Moreover, $\rho_1$ is compactly supported, $\supp\rho_1\subset \BR$, and it decreases the energy of $\rho$, for large enough $R$:
\[
\begin{split}
E[\rho_1] \le & E\left[\frac{m_0-2\epsilon}{m_0-\epsilon}\rho\chi_{{}_{\BR}}\right] + E\left[\frac{2\epsilon}{|\B1|} \chi_{{}_{\B1}}\right] \\
  & + \iint \cN(\bx-\by) \frac{2\epsilon}{|\B1|} \chi_{{}_{\B1}}(\by)\rd{\by}\frac{m_0-2\epsilon}{m_0-\epsilon}\rho\chi_{{}_{\BR}}(\bx)\rd{\bx} \\
\le & E[\rho\chi_{{}_{\BR}}] + C\epsilon + C\epsilon \| \cN*\chi_{{}_{\B1}}\|_{L^\infty} \\
\le & E[\rho] - \epsilon \min_{\bx\in \BR^c} \cV(\bx) + C\epsilon\\
\end{split}
\]
with $C$ depending on $m_0$ and $\max_{\bx\in \B1}\cV(\bx)$. Hence choosing $R$ large enough such that $\displaystyle \min_{\bx\in \BR^c} \cV(\bx) > C$,  then $E[\rho_1]\leq E[\rho]$ and the lemma follows.
\end{proof}

\begin{proof}[Proof of theorem \ref{thm_exist}]
Without loss of generality, assume $\min \cV = 0$. Therefore $E[\rho]\ge 0$ always holds. 
Fix any $\displaystyle M\ge \frac{2m_0}{|\B1|}$ and take an energy minimizing sequence $\{\rho^M_n\}$ in $S^M$, with $E[\rho_n] \le E_m$ where $E_m$ is a constant independent of $M$. By the previous lemma, we may replace $\rho^M_n$ by $\rho^M_{1,n}\in S^M$ such that $\supp\rho^M_{1,n}\subset \BR$ for some $R$ independent of $M$, and $\{\rho^M_{1,n}\}$ is still a minimizing sequence. Since $\{\rho^M_{1,n}\}$ is uniformly bounded in $L^p$ for any $1<p<\infty$, there exists a sub-sequence (still denoted as $\{\rho^M_{1,n}\}$) which converges weakly in $L^p$, to a limit denoted as $\rho_{\infty}^M$ with $\supp\rho_\infty^M\subset \BR$.  Weak $L^p$-convergence implies
\[
\lim_{n\rightarrow\infty}\int \rho^M_{1,n}\cV\rd{\bx} = \int \rho_\infty^M \cV\rd{\bx},
\]
and  since $(-\Delta)^{-1}\rho^M_{1,n}\in W^{2,p}$ converges $L^p$-strongly to $(-\Delta)^{-1}\rho_\infty^M$, then also 
\[
\lim_{n\rightarrow\infty}\int \rho^M_{1,n}(-\Delta)^{-1}\rho^M_{1,n}\rd{\bx}
= \int \rho_\infty^M (-\Delta)^{-1}\rho_\infty^M\rd{\bx}.
\]
We conclude that
\begin{equation}
\lim_{n\rightarrow\infty}E[ \rho^M_{1,n}] = E[\rho_\infty^M]
\end{equation}
i.e., $\rho_\infty^M$ is a global minimizer in $S^M$.

\noindent
We claim that
\begin{equation}\label{eq:rhoDelbound}
\|\rho_\infty^M\|_{L^\infty} \le M_\cV, \qquad M_\cV:=\|\Delta \cV\|_{L^\infty(\BR)},
\end{equation}
for any admissible $M$ (recalling that $R$ is independent of $M$). Indeed, if we assume that \eqref{eq:rhoDelbound} fails, then $\rho_\infty^M$ cannot be a steady state of \eqref{eq0} since  a steady state, by \eqref{rhosuppeq}, should satisfy $\rho_\infty(\bx) = \Delta \cV(\bx)\rchi_{\supp\rho_\infty}(\bx)$
and therefore cannot exceed $M_\cV$.
Let $\rho(t,\cdot)$ denote the solution to \eqref{eq0} subject to the ``non-steady'' initial condition $\rho_\infty^M$. The  evolving solution satisfies    $\|\rho(t,\cdot)\|_{L^\infty}\le \|\rho_\infty^M\|_{L^\infty}$ (c.f. STEP 1 of the proof of Theorem \ref{thm_Vequi} below), that is, $\rho(t,\cdot)\in S^M$ yet $E[\rho(t,\cdot)]< E[\rho_\infty^M]$ for any $t>0$, which contradicts the minimizing property of $\rho_\infty^M$ in $S^M$.
Therefore \eqref{eq:rhoDelbound} holds, implying that  
\begin{equation}
\min_{\rho\in S^M}E[\rho] = \min_{\rho\in S^{M_\cV}}E[\rho] ,\quad \forall M\ge M_\cV=\|\Delta \cV\|_{L^\infty(\BR)}.
\end{equation}  
Thus, $\rho_\infty:= \rho_\infty^{M_\cV}$ is a global minimizer,  uniformly bounded in $S^M$ for any $M\ge \|\Delta \cV\|_{L^\infty(\BR)}$.

Finally we claim that this $\rho_\infty$ is in fact a global minimizer in 
$S=\{\rho\in L^1: \rho\ge 0,\,\int \rho\rd{\bx}=m_0\}$.
 Otherwise, if there exists a $\rho\in S$ with a lower energy, $E[\rho]< E[\rho_\infty]$, then we consider the truncated  
\begin{equation}
\rho^M := \frac{m_0}{\int \min\{\rho,M\}\rd{\bx}}\min\{\rho,M\}.
\end{equation}
Then $\rho^M\in S^M$, and
\begin{equation}
E[\rho^M] \le \left(\frac{m_0}{\int \min\{\rho,M\}}\right)^2 E[\min\{\rho,M\}] \le \left(\frac{m_0}{\int \min\{\rho,M\}}\right)^2 E[\rho], 
\end{equation}
and the last quantity converges to $E[\rho]$ as $M\rightarrow\infty$. Therefore, for sufficiently large $M$, there holds $\{ \rho^M\in S^M \ : \ E[\rho^M] < E[\rho_\infty]\}$, but this contradicts the minimizing property of $\rho_\infty$ in $S^M$.
\end{proof}

\medskip\noindent
{\bf Uniqueness of steady states}. It is  straightforward to show that  global minimizers asserted in Theorem \ref{thm_exist} are unique for \emph{any} external potential $\cV(\bx)$: indeed, given any two minimizers $\rho_0$ and $\rho_1$ with the same total mass and considering the homotopy 
\begin{equation}
\rho_s(\bx) := (1-s)\rho_0(\bx) + s \rho_1(\bx) ,\quad 0\le s \le 1,
\end{equation}
one can verify the the convexity $\dfrac{\rd^2}{\rd{s}^2}E[\rho_s] > 0$, which implies uniqueness of the global energy minimizer.
However, the uniqueness of global energy minimizer does \emph{not} imply the uniqueness of steady state. In fact, a 1D example outlined in the  Appendix shows that if $\cV$ is not convex, then generally speaking steady states may not be unique, despite the uniqueness of global energy minimizer. This suggests that the conclusion of uniqueness of steady states asserted in the theorem  below is far from trivial.

\begin{theorem}\label{thm_Vuniq}
Consider the aggregation equation \eqref{eq} with radially-symmetric confinement $\cV(\bx)=\rV(r)$, \rev{satisfying} \eqref{eq:pareto} and $\Delta \cV(\bx) > 0,\,\forall \bx$. Then for each $m_0>0$, \eqref{eq} admits a unique compactly supported steady state with total mass $m_0$, and it is radially-symmetric. 
\end{theorem}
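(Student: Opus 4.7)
Let $\rho_\infty$ be any compactly supported steady state with mass $m_0$, $\Omega := \supp \rho_\infty$, and $\Psi := \cN*\rho_\infty + \cV$. The strategy is to show $\rho_\infty$ coincides with the unique global minimizer of $E$ furnished by Theorem~\ref{thm_exist}, by upgrading the steady-state condition to the full Lagrange conditions of constrained minimization and then invoking strict convexity of $E$. First, $\bu = -\nabla \Psi = 0$ on $\Omega$ forces $\Psi$ to be locally constant on $\Omega$, equal to some $c_i$ on each connected component $\Omega_i$, while a divergence of the steady-state identity combined with $\Delta\cN = -\delta$ yields \eqref{rhosuppeq}, i.e.\ $\rho_\infty = \Delta\cV \cdot \rchi_\Omega$. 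Hence $\rho_\infty \in L^\infty$, $\Psi \in C^{1,\alpha}$ with $\nabla \Psi = 0$ on $\partial\Omega$ from either side, $\Delta \Psi = 0$ on $\mathrm{int}(\Omega)$, and $\Delta \Psi = \Delta\cV > 0$ on $\Omega^c$ (strictly subharmonic).

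The technical heart is the critical-point upgrade: to show that under $\Delta\cV > 0$ these conditions strengthen to $\Psi \equiv c$ on $\Omega$ (one constant) and $\Psi \ge c$ on $\mathbb R^d$. The key idea is to combine strict subharmonicity of $\Psi$ off $\Omega$ with the matching $\nabla \Psi = 0$ on $\partial\Omega$, via the Hopf boundary-point lemma. A second-order Taylor expansion from $\partial\Omega_i$ outward exploits that the tangential Hessian of $\Psi$ vanishes on $\partial\Omega_i$ (from the constancy on $\Omega_i$) while the trace $\Delta\Psi = \Delta\cV > 0$; consequently $\Psi(\bx_0 + t\nu) = c_i + \tfrac{1}{2}\Delta\cV(\bx_0)\,t^2 + o(t^2) > c_i$ just outside $\partial\Omega_i$. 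Applied to $\Psi - c_{i_0}$ on the exterior of the maximizing component $\Omega_{i_0}$ (where $c_{i_0} = \max_i c_i$), Hopf's lemma together with this expansion rules out the existence of other components $\Omega_j$ with $c_j < c_{i_0}$; a similar argument rules out dips of $\Psi$ below the common value $c$ in $\Omega^c$.

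Once the Lagrange conditions are in place, $\rho_\infty$ is a constrained critical point of $E$ on $S := \{\rho\ge 0 : \int\rho = m_0\}$. Strict convexity of $E$, which follows from positive-definiteness of the Newton form $\iint \cN(\bx-\by)\phi(\bx)\phi(\by)\dx\dy = \|(-\Delta)^{-1/2}\phi\|_{L^2}^2$, makes this critical point unique, hence equal to the global minimizer of Theorem~\ref{thm_exist}. Radial invariance of $\cV$ makes the unique minimizer radial. Combined with $\rho_\infty = \Delta\cV \cdot \rchi_\Omega$, the support $\Omega$ is a union of concentric spherical shells, and annular components $\{a \le r \le b\}$ with $a>0$ are excluded: Newton's shell theorem gives $\Psi'(r) = a^{d-1}\rV'(a)/r^{d-1} \ne 0$ on such an annulus, since $\Delta\cV > 0$ implies $(r^{d-1}\rV'(r))' = r^{d-1}\Delta\cV > 0$ and hence $\rV'(a) > 0$ for $a>0$. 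Therefore $\Omega = \{|\bx| \le R_*\}$, with $R_*$ the unique positive root of the radial mass identity, unique by the same monotonicity together with the Pareto tail \eqref{eq:pareto}.

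\textbf{Main obstacle.} The delicate step is the critical-point upgrade: a priori, strictly subharmonic functions can admit interior minima disconnected from $\partial\Omega$, so a bare maximum principle is insufficient. What closes the argument is the $C^1$-matching $\nabla\Psi|_{\partial\Omega} = 0$ inherited from the steady-state condition together with the quantitative Taylor information derived from $\Delta\cV > 0$ on $\partial\Omega$; coordinating these with Hopf's boundary-point lemma to exclude every topological configuration of $\Omega$ is the main technical challenge.
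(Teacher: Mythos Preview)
Your plan takes a very different route from the paper and carries a genuine gap at its central step. The paper's proof is a short comparison argument: it first observes that for each $R>0$ the radial profile $\rho_R := \Delta\cV\,\rchi_{\BR}$ is itself a steady state (its total potential is radial and harmonic in $\BR$, hence constant there), then takes an arbitrary compactly supported steady state $\rho_\infty$, sets $R_\infty := \max_{\bx\in\supp\rho_\infty}|\bx|$, and subtracts the two force-balance identities at a point $\bx\in\supp\rho_\infty$ with $|\bx|=R_\infty$. After dotting with $\bx$ this yields
\[
\int_{\BRinf\setminus\supp\rho_\infty}\bx\cdot\nabla\cN(\bx-\by)\,\Delta\cV(\by)\rd\by=0;
\]
since $\bx\cdot\nabla\cN(\bx-\by)<0$ for every $|\by|<R_\infty$ and $\Delta\cV>0$, the integrand has a strict sign, forcing $\BRinf\setminus\supp\rho_\infty$ to be null. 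No boundary regularity, no Hopf lemma, no convexity of $E$ is used.

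By contrast, the heart of your plan---the ``critical-point upgrade'' via Hopf's boundary-point lemma and a second-order Taylor expansion at $\partial\Omega$---presupposes regularity that is simply not available. A priori $\Omega=\supp\rho_\infty$ is only a compact set; nothing in the steady-state condition gives an interior ball condition at $\partial\Omega$, so Hopf's lemma cannot be invoked. Moreover $\rho_\infty=\Delta\cV\,\rchi_\Omega$ jumps across $\partial\Omega$, so $\Psi=\cN*\rho_\infty+\cV$ is only $C^{1,\alpha}$ there, and the expansion $\Psi(\bx_0+t\nu)=c_i+\tfrac12\Delta\cV(\bx_0)\,t^2+o(t^2)$ is not justified: a one-sided second-order expansion from the exterior would require $C^2$ regularity of $\partial\Omega$, which is exactly what you have not established (doing so would essentially require importing obstacle-problem regularity theory, and even that presupposes the Euler--Lagrange inequality $\Psi\ge c$ you are trying to prove). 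Without this upgrade you cannot reach $\Psi\ge c$ on $\mathbb{R}^d$, and the strict-convexity step never engages. A smaller issue: your Newton-shell formula $\Psi'(r)=a^{d-1}\rV'(a)/r^{d-1}$ on an annular component assumes there is no mass inside radius $a$, so as written it excludes only the innermost annulus.
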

\begin{remark}
In the Appendix, consult proposition \ref{prop_cpt}, it is shown under a restrictive tail condition, $V'(r) \gtrsim r^{-\frac{d-1}{d+1}}$  for $r\ge R_0$,
that a steady solution of \eqref{eq} must be compactly supported. The gap between \eqref{eq:pareto} and this tale condition remains open.
\end{remark}

\begin{proof}
As a first step we record the following  family of radially symmetric steady states parameterized by a cut-off radius $R>0$
\[
\rho_{{}_R}(\bx) := \Delta \cV(\bx) \rchi_{|\bx|\le R}(\bx).
\]
 Indeed, the total potential field generated by $\rho_R(\bx)$
\[
\Phi_R(\bx) := \int  \cN(\bx-\by)\rho_{{}_R}(\by)\rd{\by}  + \cV(\bx) = \int  \cN(\bx-\by)\Delta \cV(\by)\rchi_{|\by|\le R}(\by)\rd{\by}  + \cV(\bx),
\]
 is radially symmetric and harmonic in $\BR$
 \[
-\Delta \Phi_R(\bx) = \Delta \cV(\bx)\rchi_{|\bx|\le R}(\bx) - \Delta \cV(\bx) = 0,\quad \forall |\bx|\le R.
\]
Therefore $\Phi_R(\bx)$ is constant in $|\bx|\le R$ and  $\bu_R= -\nabla\Phi_R$ vanishes there, 
\begin{equation}\label{nablaPhiR}
\int\nabla \cN(\bx-\by)\Delta \cV\rchi_{\BR}(\by)\rd{\by}
+\nabla \cV(\bx)=0, \qquad \forall \bx\in \BR,
\end{equation}
which means that $\rho_R=\Delta\cV\rchi_{\BR}$, satisfying \eqref{rhoinfty}, is a steady state.
Observe that this family of steady-states can be equally parametrized by their total mass:  for any  $m_0>0$, there exists a uniquely determined $R_0=R_0(m_0)>0$ such that\footnote{We make a minimal growth assumption $r^{d-1}V'(r) \stackrel{r\rightarrow \infty}{\longrightarrow}\infty$}
(${\mathbb S}^{d-1}$ denoting the  $d$-dimensional unit sphere) 
 \[
\frac{1}{|{\mathbb S}^{d-1}|}\int \Delta V \rchi_{\BRzero}\rd{\by}=\int_0^{R_0}\frac{\partial}{\partial r}\left(r^{d-1}V'(r)\right)\rd{r}= R_0^{d-1}V'(R_0)= m_0.
\]

In the second step we consider a compactly supported steady state $\rho_\infty$: we will show that it must coincide with $\rho_{{}_R}$ for properly chosen $R$.
To this end recall that  according to  \eqref{rhosuppeq}  (with $\cW=0$), a steady state of \eqref{eq} satisfies
\begin{equation}\label{rhodelV}
\rho_\infty(\bx) = \Delta \cV(\bx)\rchi_{\supp\rho_\infty}(\bx),
\end{equation}
and  by \eqref{rhoinfty} with $\cW=0$, it  is characterized by 
\begin{equation}\label{suppest1}
-\int \nabla \cN(\bx-\by)\Delta \cV(\by)\rchi_{\supp\rho_\infty}(\by)\rd{\by}  -\nabla \cV(\bx) = 0,\quad \forall \bx\in\supp\rho_\infty.
\end{equation}
Let $R_\infty$ denote its finite diameter 
$\displaystyle R_\infty = \max_{\bx\in\supp\rho_\infty}|\bx|$.
We turn to compare $\rho_\infty$ with the steady solution $\rho_{{}_{R_\infty}}= \Delta \cV \rchi_{\BRinf}$. By our first step, the latter is   a steady state, hence it also satisfies \eqref{rhoinfty} (with $\cW=0$), namely
\begin{equation}\label{suppest2}
-\int \nabla \cN(\bx-\by)\Delta \cV(\by) \rchi_{\BRinf}(\by)\rd{\by}  -\nabla \cV(\bx) = 0,\quad \forall \bx \in \BRinf.
\end{equation}
By definition, $\BRinf\supset \supp\rho_\infty$ and there exists $\bx\in  \supp\rho_\infty$ such that $|\bx|=R_\infty$. Taking the difference between \eqref{suppest1} and \eqref{suppest2} and multiply by that $\bx$ gives
\begin{equation}\label{suppest}
-\int \bx\cdot\nabla \cN(\bx-\by)\Delta \cV(\by)\rchi_{\BRinf\backslash\supp\rho_\infty}(\by)\rd{\by} = 0.
\end{equation}
Now, with $\nabla \cN(\bx) = -c_d |\bx|^{-d}\bx$ we compute that for any $|\by| < R_\infty$, consult figure \ref{fig:config} below,
\begin{equation}\label{x0y}
\bx\cdot \nabla \cN(\bx-\by) = -\frac{c_d}{|\bx-\by|^d} \bx\cdot (\bx-\by)= -\frac{c_d}{|\bx-\by|^d} (R_\infty^2 - \bx\cdot \by) < 0, \quad |\by| < R_\infty.
\end{equation}
Thus, the first integrand in \eqref{suppest} does not vanish; by assumption, the second integrand is strictly positive, and consequently the third inregrand must vanish,
\begin{equation}
\supp\rho_\infty = \{\by \ : \ |\by|\le R_\infty\}.
\end{equation}
Therefore, the steady state $\rho_\infty$ is uniquely determined as the radially symmetric $\rho_\infty=\Delta \cV(\bx) \rchi_{|\bx|\le R_\infty}(\bx)$.
\end{proof}

\medskip\noindent
{\bf Convergence rate towards  equilibrium}. A similar comparison argument has been used in \cite[\S3.1]{BLL12} in the case of quadratic potential $\cV(\bx) = |\bx|^2$. Here we extend this argument to general radially-symmetric potentials. Moreover, we pursue a considerably more intricate  comparison argument  to study the \emph{rate} of equilibration of \eqref{eq}. 
This is the content of our next result. 
\begin{theorem}\label{thm_Vequi}
Consider the aggregation equation \eqref{eq} with a $C^3$ radially-symmetric confining potential $\cV(\bx)=V(r)$, satisfying 
\begin{equation}\label{thm_Vequi_1}
0<a\le \Delta \cV(\bx) \le A < \infty,\quad \forall \bx,
\end{equation}
and subject to compactly supported initial data $\rho_0$ with uniform lower-bound \footnote{Note that $\rho_0$  is therefore discontinuous on  $\partial\supp \rho_0$ while assumed bounded away from vacuum on $\supp\rho_0$.}
\[
\rho_0(\bx)\geq \rho_{min}>0, \qquad  \forall \bx\in\supp \rho_0.
\] 
Then its energy $E(t)=E[\rho(t,\cdot)]$ decays towards the limiting energy  $E_\infty$,
\begin{equation}\label{thm_Vequi_2}
E(t) - E_\infty \le C_\gamma(1+t)^{-\gamma},\quad  \gamma<\frac{d+2}{(d-2)(d+1)},\quad  t\ge 0, \quad E_\infty=E[\rho_\infty].
\end{equation}
\rev{Furthermore, $\rho(t,\cdot)$ converges to $\rho_\infty$  with $L^1$-convergence rate
\begin{equation}
\|\rho(t,\cdot)-\rho_\infty\|_{L^1} \le C_\gamma (1+t)^{-\gamma/2}.
\end{equation}
}
\end{theorem}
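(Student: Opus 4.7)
The approach is to derive a Łojasiewicz-type inequality
\[
E(t)-E_\infty\le C\,\cD[\rho(t,\cdot)]^{\theta},\qquad \theta=\frac{d+2}{d^{2}},
\]
which, combined with the energy identity \eqref{dE}, reduces the theorem to integrating the ODE $\dot{\phi}\le-c\phi^{1/\theta}$ for $\phi(t):=E(t)-E_\infty$. A standard comparison then yields $\phi(t)\le C_\gamma(1+t)^{-\gamma}$ with $\gamma=\theta/(1-\theta)=(d+2)/((d-2)(d+1))$, the advertised rate. The comparison steady state is $\rho_\infty=\Delta\cV\,\rchi_{\BRinf}$ from Theorem~\ref{thm_Vuniq}, with $R_\infty$ fixed by mass. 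As a preliminary STEP~1, I would establish pointwise bounds on $\rho(t,\cdot)$ by writing \eqref{eq} in characteristic form: since $\nabla\cdot\bu=-\Delta\Phi=\rho-\Delta\cV$, the density satisfies the logistic ODE $\tfrac{D\rho}{Dt}=\rho(\Delta\cV-\rho)$ along trajectories, and together with \eqref{thm_Vequi_1} this propagates both the upper bound $\rho\le\max\{\|\rho_0\|_{L^\infty},A\}$ and the lower bound $\rho\ge\min\{\rho_{\min},a\}$ on $\supp\rho(t,\cdot)$, while \eqref{eq:pareto} keeps $\supp\rho(t,\cdot)$ inside a fixed ball.

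The heart of the argument is a two-sided geometric comparison via the symmetric-difference volume $V(t):=|\BRinf\,\triangle\,\supp\rho(t,\cdot)|$. Using that $\Phi_\infty=\cN*\rho_\infty+\cV$ is constant on $\BRinf$ (because $\rho_\infty$ is a steady state) together with $\int(\rho-\rho_\infty)\,\dx=0$, one obtains the identity
\[
E(t)-E_\infty=\tfrac{1}{2}\iint\cN(\bx-\by)(\rho-\rho_\infty)(\by)(\rho-\rho_\infty)(\bx)\,\dy\,\dx+\int_{\BRinf^{c}}\rho\,(\Phi_\infty-\Phi_\infty|_{\BRinf})\,\dx,
\]
whose first term is the $\dot H^{-1}$-seminorm squared of $\rho-\rho_\infty$. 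Hardy--Littlewood--Sobolev combined with STEP~1 then upgrades this to the upper gap estimate $E(t)-E_\infty\le CV(t)^{(d+2)/d}$, which is the Newton-energy scaling of a region of volume $V(t)$. For the matching lower estimate $\cD[\rho(t,\cdot)]\ge cV(t)^{d}$, the plan is to construct a transport vector field $\bv$ realigning $\supp\rho(t,\cdot)$ with $\BRinf$: Cauchy--Schwarz applied to $|\tfrac{\rd}{\rd t}V(t)|\le\int\rho|\bv\cdot\bu|\,\dx$, combined with a boundary-layer/trace estimate near $\partial\supp\rho(t,\cdot)$, produces the factor $(d+1)$ in the rate. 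Composing the two estimates delivers the Łojasiewicz inequality.

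Integrating $\dot\phi\le-c\phi^{d^{2}/(d+2)}$ yields $E(t)-E_\infty\le C_\gamma(1+t)^{-\gamma}$ for every $\gamma<(d+2)/((d-2)(d+1))$. For the $L^{1}$ rate, the first term in the splitting above gives $\|\rho(t,\cdot)-\rho_\infty\|_{\dot H^{-1}}^{2}\le 2\phi(t)$; combining this with the uniform $L^\infty$ bound of STEP~1 and compact support via an interpolation of the form $\|f\|_{L^{1}}^{2}\le C\|f\|_{L^\infty}^{\alpha}\|f\|_{\dot H^{-1}}^{2}$ (with $\alpha$ fixed by scaling on the fixed ball containing both supports) produces $\|\rho(t,\cdot)-\rho_\infty\|_{L^{1}}\le C_\gamma(1+t)^{-\gamma/2}$, as stated. \textbf{The main obstacle} is the dissipation-to-volume estimate $\cD\ge cV^{d}$: converting the $\rho$-weighted $L^{2}$ velocity integral into a purely geometric quantity requires a near-optimal choice of displacement at the moving free boundary $\partial\supp\rho(t,\cdot)$ together with sharp trace inequalities there. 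The broken denominator $(d-2)(d+1)$ in $\gamma$ records exactly this structure: the factor $(d-2)$ is the Newtonian scaling and $(d+1)$ is the boundary-layer trace exponent, and obtaining the latter sharply and uniformly in time is the delicate step.
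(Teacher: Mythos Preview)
Your STEP~1 and the energy-splitting identity are both correct and match the paper. The genuine gap is in the claimed upper bound $E(t)-E_\infty\le CV(t)^{(d+2)/d}$ via HLS. This would follow if $\rho-\rho_\infty$ were supported on the symmetric difference $\BRinf\,\triangle\,\supp\rho(t,\cdot)$, but it is not: on the common region $\BRinf\cap\supp\rho(t,\cdot)$ one has $\rho_1=\rho-\Delta\cV$, and this is \emph{not} zero for $t<\infty$. The identity $\rho=\Delta\cV$ on the support characterizes steady states only; even if it holds at $t=0$ it does not propagate under \eqref{eq}. Hence $\|\rho_1\|_{L^{2d/(d+2)}}$ carries a ``bulk'' contribution that is simply invisible to your geometric quantity $V(t)$, and the bound $\|\rho_1\|_{L^q}^2\lesssim V(t)^{2/q}$ is false in general. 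The same issue infects the proposed lower bound $\cD\ge cV^{d}$: one can have $V(t)$ of order one while $\rho$ is far from $\Delta\cV$ on its support in a way that makes $\bu$ nearly orthogonal to any displacement field you might construct; the Cauchy--Schwarz step you sketch bounds $|\dot V|$ by $\cD^{1/2}$ times a transport norm, which goes the wrong way for a lower bound on $\cD$.

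The paper confronts exactly this obstruction by introducing a second functional
\[
F(t)=\tfrac12\int\big(\rho(t,\bx)-\Delta\cV(\bx)\big)^2\rho(t,\bx)\,\dx,
\]
which measures the bulk discrepancy you are missing, and proves the differential inequality $\dot F\le -cF+C\cD$ directly from the characteristic ODE. It then tracks the outer radius $R(t)=\max_{\bx\in\supp\rho}|\bx|$ rather than the full symmetric difference, and shows $\tfrac{\rd}{\rd t}(R-R_\infty)_+^m\le -c(R-R_\infty)_+^m+CF$ via a pointwise comparison at an extremal boundary point (this is where the $L^{d,1}$ interpolation enters and produces the exponent $d$). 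No \L ojasiewicz inequality $E-E_\infty\le C\cD^\theta$ is ever proved; instead the three quantities are combined into a single Lyapunov functional $\widetilde E=(E-E_\infty)+\epsilon_1F+\epsilon_2(R-R_\infty)_+^m$, and the closing estimate is $E-E_\infty\lesssim (R-R_\infty)_+^{2/q}+F$ with $q=2d/(d+2)$, which now \emph{does} follow from HLS because the bulk term is absorbed by $F$. This yields $\dot{\widetilde E}\le -c\widetilde E^{\alpha}$ with $\alpha=mq/2$, and the $L^1$ rate comes from $\|\rho_1\|_{L^1}\lesssim (R-R_\infty)_++F^{1/2}$ rather than from an $\dot H^{-1}$--$L^\infty$ interpolation. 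In short, your scheme collapses the problem to one geometric variable $V(t)$, but the dynamics has two independent modes --- support mismatch and density mismatch --- and the missing functional $F$ is not optional.
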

The proof, provided in section \ref{sec:equi}, proceeds by comparing between the family of steady solutions,  $\rho_{{}_{R(t)}}$ with $R(t) := \max_{\bx\in\supp\rho(t,\cdot)}|\bx|$ associated with the given solution $\rho(t,\cdot)$, and the steady state $\rho_\infty$. Compared with 
the argument outlined in Theorem \ref{thm_Vuniq}, here we lack the steady state characterization  \eqref{rhodelV}: in fact, even if \eqref{rhodelV} is assumed to hold for the initial data, $\rho_0=\Delta \cV \rchi_{\supp \rho_0}$, it does not necessarily propagate in time.
We resolve this difficulty by introducing the functional 
\begin{equation}\label{F}\begin{split}
& F(t) := \frac{1}{2}\int \Big(\rho(t,\bx) - \Delta \cV(\bx)\Big)^2\rho(t,\bx)\rd{\bx},
\end{split}\end{equation}
which  measures the discrepancy of $\rho(t,\bx)$ from satisfying \eqref{rhodelV}. Then, we design a Lyapunov-type modified energy  functional,
$\widetilde{E}$ by combining $E(t) - E_\infty$, $F(t)$ and the discrepancy of radius $R(t)-R_\infty$ where 
\begin{equation}\label{R}
R(t) = \max_{\bx\in\supp\rho(t,\cdot)}|\bx|,\quad R_\infty = \max_{\bx\in\supp\rho_\infty}|\bx|.
\end{equation}
Verifying the algebraic decay rate of  $\widetilde{E}$  implies the result \eqref{thm_Vequi_2}, as well as quantifies the algebraic rate of $R(t)-R_\infty$,
\[
(R(t)-R_\infty)_+ \lesssim C_\gamma (1+t)^{-\frac{d+2}{d(d-2)(d+1)}}.
\]
The proof of Theorem \ref{thm_Vequi} tells us that the aggregation solution $\rho(t,\cdot)$ approaches the unique steady state $\rho_\infty$ in the sense of 2-Wasserstein distance with algebraic convergence rate.
Note that in the case $d=2$ this algebraic  rate $\gamma$ can be arbitrarily large, while for higher spatial dimensions, $\gamma$ is restricted by a $d$-dependent constant.
\begin{remark}
The same methodology may also apply to $\cV(\bx)$ which is not radially-symmetric, as long as the first step in our proof of Theorem \ref{thm_Vuniq} goes through. To be precise, assume the existence of a parameterized family of steady states, $\{\rho_\infty(\bx;p)\}$, such that $($i$)$ $\supp\rho_\infty(\cdot;p)$ is convex, and $($ii$)$  the following monotonicity condition holds, $\supp\rho_\infty(\cdot;p_1)\subset \supp\rho_\infty(\cdot;p_2)$ whenever $p_1<p_2$ $($and as before, there is one-to-one correspondence with the initial mass $p=p(m_0)$$)$. Then one can obtain the uniqueness of steady states for fixed $p_0$, and derive  the equilibration rate via a similar approach. It remains open to explore more general class of external potentials which give rise to the existence of such a family of steady states.
\end{remark}

\subsection{Newtonian repulsion with attraction}

We apply the ideas in the previous subsection to study the aggregation equation  \eqref{eq0},\eqref{whatisPhi} with pairwise interaction potential $\Phi$ given by sum of  Newtonian repulsion and smooth attraction potential $\cW$,
\begin{equation}\label{eq1}
\partial_t \rho + \nabla\cdot(\rho \bu) = 0,\qquad \bu(t,\bx) = -\nabla\Phi,
\quad \Phi=\cN*\rho + \cW*\rho.
\end{equation}

Observe that  being a solution of the dynamics with pairwise attraction equation \eqref{eq1}, $\rho$ can be also viewed as  a solution of the external potential  equation \eqref{eq} with a  $\rho$-dependent potential $\cV_\rho=\cW*\rho(t,\cdot)$. The distinction is that $\cV_\rho$ is time-dependent, except in the case of quadratic pairwise attraction, $\cW_2:=\hf|\bx|^2$.
Indeed, since \eqref{eq1} preserves the center of mass $\mathbf{c}_0 := \int \bx \rho_0(\bx)\rd{\bx} = \int \bx \rho(t,\bx)\rd{\bx}$, one may assume $\mathbf{c}_0=0$ without loss of generality, hence
\[
\nabla  (\cW_2*\rho)(t,\bx) = \int (\bx-\by) \rho(t,\by)\rd{\by} =m_0\bx = -\nabla \cV_2(\bx), \quad \cV_2:=\hf m_0|\bx|^2.
\]
 Thus, the forcing induced by pairwise quadratic attraction is equivalent to aggregation with quadratic confinement, $-\nabla \Phi=-\nabla\cN*\rho-\nabla\cW_2*\rho=-\nabla \cN*\rho -\nabla \cV_2$. The following theorem states the uniqueness of steady states of  pairwise attraction \eqref{eq1} for potentials, $\cW$, \emph{close to} quadratic.

\begin{theorem}\label{thm_Wuniq}
Consider the aggregation equation \eqref{eq1} with an attraction potential
\begin{equation}\label{thm_Wuniq_1}
\cW(\bx) = \frac{|\bx|^2}{2d} + \cw(\bx), \qquad |\Delta w(\bx)| \le \epsilon,
\end{equation}
where  $w(\bx)={\rw}(|\bx|)$ is a radially-symmetric perturbation  of ``order'' $\epsilon>0$, depending on $d$. Then for each $m_0>0$, \eqref{eq1} admits a unique steady state (up to translation) with total mass $m_0$,  and it is radially-symmetric.
\end{theorem}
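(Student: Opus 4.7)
The plan is to adapt the comparison strategy of Theorem \ref{thm_Vuniq} to the self-consistent pairwise setting of \eqref{eq1}. Since \eqref{eq1} is translation invariant, we may restrict to steady states whose center of mass is at the origin. For such a $\rho$ one has $\nabla\cW*\rho = (m_0/d)\bx + \nabla\cw*\rho$, so $\rho$ is simultaneously a steady state of the external-potential equation with a $\rho$-dependent confinement $\cV_\rho = \tfrac{m_0}{2d}|\bx|^2 + \cw*\rho$. The divergence identity \eqref{rhosuppeq} then reduces, in view of $\Delta\cW = 1 + \Delta\cw$, to
\begin{equation}\label{eq:plan_fp}
\rho(\bx) = m_0 + (\Delta\cw*\rho)(\bx), \qquad \bx \in \supp\rho,
\end{equation}
an implicit relation taking the place of the explicit $\rho = \Delta\cV$ used in Theorem \ref{thm_Vuniq}.

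The first step is to construct a one-parameter family of radial steady states. For each $R$ close to the quadratic value $R_0 := |\B1|^{-1/d}$, I would look for $\rho_R$ supported in $\BR$ as the fixed point of the affine map $T_{R,m}(\rho) := m + \Delta\cw*(\rho\rchi_{\BR})$. The contraction estimate $\|T_{R,m}(\rho_1) - T_{R,m}(\rho_2)\|_{L^\infty(\BR)} \le \epsilon\,|\BR|\,\|\rho_1 - \rho_2\|_{L^\infty(\BR)}$ gives, for $\epsilon$ small enough, a unique fixed point $\rho_{R,m}$, which is radial because $\cw$ is. Integrating \eqref{eq:plan_fp} over $\BR$ yields a constitutive relation that at $\epsilon = 0$ reduces to $|\BR| = 1$; the implicit function theorem then produces a $C^1$ bijection $R \leftrightarrow m(R)$ between a neighborhood of $R_0$ and a range of admissible masses. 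Each $\rho_R$ is a genuine steady state since $\Phi_R = \cN*\rho_R + \cW*\rho_R$ is radial and harmonic inside $\BR$, hence constant, so $\nabla\Phi_R \equiv 0$ on $\supp\rho_R$.

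With this family in hand, uniqueness proceeds along the template of Theorem \ref{thm_Vuniq}. Let $\rho_\infty$ be a compactly supported steady state with mass $m_0$ and center of mass at the origin, set $R_\infty := \max_{\bx \in \supp\rho_\infty}|\bx|$, and fix $\bx_0 \in \supp\rho_\infty$ with $|\bx_0| = R_\infty$. Subtracting the vanishing-force identities for $\rho_\infty$ and the family member $\rho_{R_\infty}$ at $\bx_0$, and taking the dot product with $\bx_0$, yields
\begin{equation}\label{eq:plan_compW}
\int \bx_0\cdot\nabla\cN(\bx_0-\by)\,[\rho_\infty - \rho_{R_\infty}](\by)\rd{\by} + \frac{R_\infty^2}{d}\bigl(m_0 - m(R_\infty)\bigr) + \int \bx_0\cdot\nabla\cw(\bx_0-\by)\,[\rho_\infty - \rho_{R_\infty}](\by)\rd{\by} = 0.
\end{equation}
By the sign identity \eqref{x0y}, $\bx_0\cdot\nabla\cN(\bx_0-\by) < 0$ throughout $|\by| < R_\infty$, so the first integral is of definite strict sign whenever $\BR_\infty \setminus \supp\rho_\infty$ has positive measure (recall $\rho_{R_\infty} > 0$ on $\BR_\infty$ by \eqref{eq:plan_fp} and the smallness of $\epsilon$). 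For $\epsilon$ small the mass-mismatch and $\nabla\cw$-terms are controlled by an $O(\epsilon)$ multiple of the dominant Newtonian contribution and cannot cancel it, forcing $\supp\rho_\infty = \BR_\infty$. The mass condition $m_0 = m(R_\infty)$ then pins down $R_\infty$ uniquely, and the Stage 1 contraction applied to \eqref{eq:plan_fp} gives $\rho_\infty \equiv \rho_{R_\infty}$, so $\rho_\infty$ is radial.

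The hard part, absent from Theorem \ref{thm_Vuniq}, is precisely the implicit nature of \eqref{eq:plan_fp}: the sign of $\rho_\infty - \rho_{R_\infty}$ on $\BR_\infty$ is no longer manifest, and \eqref{eq:plan_compW} cannot be closed by pure sign arithmetic as in \eqref{suppest}. One must quantify, uniformly for $\bx_0$ on $\partial\BR_\infty$, that the two perturbative integrands in \eqref{eq:plan_compW} are $O(\epsilon)$ multiples of the strictly signed Newtonian term; this is where the smallness of $\epsilon$, depending on $d$ and on geometric constants attached to $R_0$, plays its decisive role. A secondary technical point is verifying strict monotonicity of $R \mapsto m(R)$ in Stage 1, so that the mass constraint uniquely identifies $R_\infty$.
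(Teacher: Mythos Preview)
Your plan to transplant the comparison argument of Theorem~\ref{thm_Vuniq} is natural, but the attempt to build a one-parameter family of radial steady states of \eqref{eq1} \emph{indexed by radius} breaks down. At $\epsilon=0$ the constraint you obtain by integrating \eqref{eq:plan_fp} over $\BR$ reads $m_0 = m_0|\BR|$, so \emph{every} radial steady state, regardless of mass, is supported on the single ball of radius $R_0$. The map $m\mapsto R(m)$ is therefore constant at $\epsilon=0$, and your ``secondary technical point'' of verifying strict monotonicity of $R\mapsto m(R)$ is not a side issue but the central obstruction: monotonicity fails at leading order and cannot be recovered perturbatively. For small $\epsilon>0$ the radii of the radial family stay within $O(\epsilon)$ of $R_0$, so the comparison object $\rho_{R_\infty}$ you invoke exists only when $R_\infty$ already lies in this tiny window --- yet $R_\infty$ is the outer radius of an \emph{arbitrary} steady state whose support you have not yet shown to be a ball, and the constraint $|\supp\rho_\infty|=1+O(\epsilon)$ does not prevent $R_\infty$ from being large. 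Relatedly, the mass-mismatch term $\tfrac{R_\infty^2}{d}(m_0-m(R_\infty))$ in \eqref{eq:plan_compW} is not an $O(\epsilon)$ perturbation: the near-degeneracy of $R\leftrightarrow m$ makes $|m_0-m(R_\infty)|$ of order one even when $R_\infty$ is $O(\epsilon)$-close to the family's radii, so it is not dominated by the Newtonian term.

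The paper sidesteps all of this by a different comparison. It fixes once and for all the radial energy minimizer $\rho_\infty$ (radius $R_\infty$) of mass $m_0$, sets $\tV_\infty=\cW*\rho_\infty$, and compares an arbitrary steady state $\rho$ (radius $R$, self-generated potential $\tV=\cW*\rho$) not against a steady state of \eqref{eq1} but against $\Delta\tV_\infty\,\rchi_{|\bx|\le R}$ --- a steady state of the \emph{external-potential} equation \eqref{eq} with the fixed radial potential $\tV_\infty$, which exists for every $R>0$. The new ingredient replacing your fixed-point family is a contraction bound on the potentials themselves,
\[
\|\Delta\tV-\Delta\tV_\infty\|_{L^\infty} \;\le\; C\epsilon\,\big|\supp\rho\setminus\supp\rho_\infty\big|,
\]
obtained by writing $\tV-\tV_\infty=\int \cw(\bx-\by)(\rho-\rho_\infty)(\by)\rd\by$, splitting the domain of integration into $\supp\rho\cap\supp\rho_\infty$, $\supp\rho\setminus\supp\rho_\infty$, $\supp\rho_\infty\setminus\supp\rho$, and using equality of total mass. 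This already forces $R>R_\infty$ unless $\rho=\rho_\infty$; a comparison at a boundary point $|\bx|=R$ then closes the argument, with the cases $R\le 2R_\infty$ and $R>2R_\infty$ treated separately to handle possibly large $R$.
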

The case $w\equiv 0$ corresponds to the Theorem  \ref{thm_Vuniq} with $\Phi=\cN*\rho+\cV_2$, Theorem \ref{thm_Wuniq} can be viewed as a perturbation of Theorem \ref{thm_Vuniq}, $\Phi=\cN*\rho+\cV$, with a perturbed potential
$\cV=\cV_2+w*\rho$, satisfying  
$\Delta \cV=d+\Delta w*\rho>1-\epsilon m_0>0$.
Alternatively, this can be viewed as aggregation driven by quadratic external forcing, $\Phi=\cN_\epsilon*\rho+\cV_2$, with perturbed Newtonian repulsion
$\cN_\epsilon:=\cN+w$.\newline
We expect that an explicit algebraic equilibration rate can be obtained by the same method as the previous subsection, and this is left as future work.

\section{Equilibration of Newtonian repulsion with confining potential}\label{sec:equi}

In this section we prove Theorem \ref{thm_Vequi}. We first prepare   a quantitative version of \eqref{x0y}.
\begin{lemma}\label{lem1}
For any $\bx$ with $|\bx|=R>0$, there holds
\begin{equation}\label{eq:lem1}
\bx\cdot \nabla \cN(\bx-\by) \le -\frac{c}{R^{d-2}} ,\quad \forall \by\ne \bx, \ |\by| \le R, \quad d\geq2.
\end{equation}
\end{lemma}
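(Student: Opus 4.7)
The plan is a direct computation exploiting the closed form $\nabla\cN(\bz) = -c_d\,\bz/|\bz|^d$ for $d\ge 3$ (and the analogous expression with constant $1/(2\pi)$ for $d=2$). Writing $\bz=\bx-\by$ gives
\[
\bx\cdot\nabla\cN(\bx-\by) = -c_d\,\frac{\bx\cdot(\bx-\by)}{|\bx-\by|^d} = -c_d\,\frac{R^2-\bx\cdot\by}{|\bx-\by|^d},
\]
exactly as in \eqref{x0y}. The task is then to bound the numerator below by a multiple of $|\bx-\by|^2$ and the denominator above by a multiple of $R^d$.

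For the numerator, I would use the polarization identity
\[
|\bx-\by|^2 = R^2 - 2\bx\cdot\by + |\by|^2, \qquad\text{hence}\qquad R^2-\bx\cdot\by = \tfrac{1}{2}\bigl(R^2-|\by|^2 + |\bx-\by|^2\bigr).
\]
Since $|\by|\le R$, the first term is non-negative, so $R^2-\bx\cdot\by \ge \tfrac{1}{2}|\bx-\by|^2$. Combined with the trivial upper bound $|\bx-\by|\le |\bx|+|\by|\le 2R$, this yields
\[
-\bx\cdot\nabla\cN(\bx-\by) \;=\; c_d\,\frac{R^2-\bx\cdot\by}{|\bx-\by|^d} \;\ge\; \frac{c_d}{2\,|\bx-\by|^{d-2}} \;\ge\; \frac{c_d}{2\cdot(2R)^{d-2}} \;=\; \frac{c}{R^{d-2}},
\]
with $c=c_d/2^{d-1}$. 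This is the required estimate \eqref{eq:lem1}.

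The case $d=2$ is handled identically, with $c_d$ replaced by $1/(2\pi)$; the factor $R^{d-2}$ degenerates to $1$ and the inequality becomes $\bx\cdot\nabla\cN(\bx-\by)\le -c$ for an absolute constant $c>0$. There is essentially no obstacle here — the only subtle point is ensuring the bound $R^2-\bx\cdot\by\ge \tfrac{1}{2}|\bx-\by|^2$ is strict when $\by\ne\bx$, which is immediate since both terms on the right of the polarization identity are non-negative and $|\bx-\by|>0$. Thus the entire proof reduces to the polarization identity plus the diameter bound $|\bx-\by|\le 2R$.
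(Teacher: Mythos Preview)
Your proof is correct and essentially identical to the paper's: both use the polarization identity $(\bx-\by)\cdot\bx=\tfrac12(|\bx-\by|^2+|\bx|^2-|\by|^2)\ge \tfrac12|\bx-\by|^2$ together with the diameter bound $|\bx-\by|\le 2R$ to obtain $c=c_d/2^{d-1}$ (up to the harmless convention of whether the factor $d-2$ is absorbed into $c_d$). The paper likewise treats $d=2$ by the same computation with the appropriate adjustment of the constant.
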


\noindent
Indeed, since $\displaystyle (\bx-\by)\cdot\bx \equiv \frac{1}{2}(|\bx-\by|^2+|\bx|^2-|\by|^2) \ge  \frac{1}{2}|\bx-\by|^2$, \eqref{eq:lem1} follows in view of 
\[
\bx\cdot \nabla \cN(\bx-\by) = -\frac{(d-2)c_d}{|\bx-\by|^d} \bx\cdot (\bx-\by) \le -\frac{(d-2)c_d}{2|\bx-\by|^{d-2}} \le -\frac{c}{R^{d-2}}, \quad c=(d-2)c_d 2^{1-d},  
\]
with the proper adjustment of $c>0$ in the 2D case. Below, we use $L^{p,q}$  denote  the usual notation of Lorentz space, e.g., \cite{BS88}.

\medskip
We will also need the following interpolation bound.  
\begin{lemma}\label{lem2} For compactly supported $g\in L^\infty_c({\Bbb R}^d)$ there holds,
\begin{equation}\label{eq:lem2}
\|g\|_{L^{d,1}} \lesssim \left\{\begin{array}{ll} C_d\|g\|^{\frac{2}{d}}_{L^2}\times \|g\|^{1-\frac{2}{d}}_{L^\infty},& \quad d>2, \\ \\ 
C_p\|g\|^{\frac{p}{2}}_{L^2}\times \|g\|^{1-\frac{p}{2}}_{L^\infty}, & \quad d=2, \forall p<2.
\end{array}\right.
\end{equation}
\end{lemma}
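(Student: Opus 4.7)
The natural strategy is to combine the distribution-function representation of the Lorentz norm with Chebyshev's inequality. Setting $M:=\|g\|_{L^\infty}$ and recalling that $|\{|g|>\lambda\}|$ vanishes for $\lambda>M$, the layer-cake formula gives
\[
\|g\|_{L^{d,1}} \;=\; d\int_0^{\infty}|\{|g|>\lambda\}|^{1/d}\rd{\lambda} \;=\; d\int_0^{M}|\{|g|>\lambda\}|^{1/d}\rd{\lambda}.
\]
Applying Chebyshev in the form $|\{|g|>\lambda\}|\le \lambda^{-p}\|g\|_{L^p}^{p}$ with a subcritical exponent $p<d$ then yields
\[
\|g\|_{L^{d,1}} \;\le\; d\,\|g\|_{L^p}^{p/d}\int_0^{M}\lambda^{-p/d}\rd{\lambda} \;=\; \frac{d}{1-p/d}\,\|g\|_{L^p}^{p/d}M^{1-p/d}.
\]

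For $d>2$ I would take the critical choice $p=2$, which is admissible and delivers the first bound directly, with constant $C_d = d^{2}/(d-2)$. For $d=2$ the value $p=2$ is precisely the endpoint where the $\lambda$-integral becomes logarithmically divergent at the origin, so I would settle for any subcritical exponent $p<2$; this yields the second bound with constant $C_p = 2/(1-p/2)$, which blows up as $p\uparrow 2$. To land on the $L^2$ norm displayed on the right-hand side of the two-dimensional case, one invokes H\"older on the (bounded) support of $g$, $\|g\|_{L^p}\le |\supp g|^{1/p-1/2}\|g\|_{L^2}$, absorbing the support-diameter factor into $C_p$.

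The only real obstacle is this endpoint obstruction at $p=d$: in dimension two, Chebyshev with the critical $L^2$ control fails by a logarithm, forcing the subcritical loss in the exponent. Beyond that, the estimate is a one-line Chebyshev-plus-layer-cake computation, and the constants track explicitly from the evaluation of $\int_0^M \lambda^{-p/d}\rd{\lambda}$.
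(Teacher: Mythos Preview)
Your proof is correct and follows essentially the same approach as the paper: both start from the layer-cake representation $\|g\|_{L^{d,1}}=d\int_0^{M}\lambda_g(s)^{1/d}\rd{s}$, derive the intermediate bound $\|g\|_{L^{d,1}}\lesssim \|g\|_{L^p}^{p/d}\|g\|_{L^\infty}^{1-p/d}$ valid for $p<d$, and then treat $d=2$ by passing from $L^p$ to $L^2$ via the compact support. The only difference is in how that intermediate bound is reached---you use the pointwise Chebyshev estimate $\lambda_g(s)\le s^{-p}\|g\|_{L^p}^p$ and then integrate, whereas the paper applies H\"older's inequality directly to the $s$-integral with exponents $(d,d')$; your route is marginally more elementary and gives the constants explicitly, but the two arguments are otherwise identical in structure and scope.
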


\noindent
Indeed, if $\lambda_g(s)=|\{\bx\, : \, |g(\bx)|>s\}|$ is the distribution function associated with $g$,  then for any $1<p<r<\infty$,
\[
\begin{split}
\|g\|_{L^{r,1}} & =r \int_0^{\|g\|_{L^\infty}} \lambda^{1/r}_g(s)\rd{s} \\
&\lesssim \left(\int_0^\infty s^p \lambda_g(s)\frac{{\rd}{s}}{s}\right)^{1/r}
\times \left(\int_0^{\|g\|_{L^\infty}} s^{-\big(\frac{p-1}{r}\big)r'}\rd{s}\right)^{1/r'}
= C_{p,r}\|g\|^{\frac{p}{r}}_{L^p}\times \|g\|^{1-\frac{p}{r}}_{L^\infty},
\end{split}
\]
and \eqref{eq:lem2} follows with $(r,p)=(d,2)$. When $d=2$ we  use it with $r=2$ and any $p<2$, so that  for compactly supported $g$'s,
\[
\begin{split}
\|g\|_{L^{2,1}}  \lesssim C_p\|g\|^{\frac{p}{2}}_{L^p}\times \|g\|^{1-\frac{p}{2}}_{L^\infty} 
\lesssim C_p\|g\|^{\frac{p}{2}}_{L^2}\times \|g\|^{1-\frac{p}{2}}_{L^\infty}, \qquad \forall p<2.
\end{split}
\]

\smallskip
\begin{proof}[Proof of Theorem \ref{thm_Vequi}]
The assumptions of Theorem \ref{thm_Vuniq} are satisfied, and hence a unique radial steady  state $\rho_\infty$ with prescribed mass $m_0$ exists, satisfying  
$\rho_\infty = \Delta \cV \rchi_{|\bx|\le R_\infty}$.

{\bf STEP 1} --- Upper and lower bounds of $\rho$. Tracing \eqref{eq}  along characteristics,
\[
\rho' := \partial_t \rho + \bu\cdot\nabla\rho = -\rho \nabla\cdot \bu = \rho(\Delta \cV-\rho), \qquad 0<a\leq \Delta \cV \leq A,
\]
 implies that after a certain time  $t_0$ (which may depend on $\displaystyle a,A\rev{,\min_{\bx\in\supp\rho_0}\rho_0(\bx)}$ but otherwise is independent\footnote{for example, take $t_0 \gtrsim \max\{|\log\tfrac{\min \rho_0}{2a}|,\frac{1}{A}\}$.} of $\max\rho_0$), there holds
\[
\frac{a}{2} \le \rho(t,\bx) \le 2A,\qquad \forall t\ge t_0,\quad \forall \bx\in \supp\rho(t,\cdot),
\]
Therefore,  by shifting the initial time if necessary, we may assume, without loss of generality, that we have the uniform bounds
\begin{equation}\label{rhomin}
0<\rho_{min} \le \rho(t,\bx) \le \rho_{max},\qquad \forall t\ge 0,\quad \forall \bx\in \supp\rho(t,\cdot).
\end{equation}

{\bf STEP 2}--- Estimate the discrepancy functional $F(t)$ in \eqref{F}.
A straightforward computation yields

\[
\begin{split}
\ddt F(t) = & \int (\rho - \Delta \cV)\partial_t\rho \cdot\rho\rd{\bx} + \frac{1}{2}\int (\rho - \Delta \cV)^2\partial_t\rho\rd{\bx} \\
= & -\int (\rho - \Delta \cV)\nabla\cdot(\rho \bu)\rho\rd{\bx} + \int (\rho - \Delta \cV)\nabla(\rho - \Delta \cV)\cdot\bu\rho\rd{\bx} \\
= & \int (-\nabla\rho\cdot \bu - \rho\nabla\cdot \bu + \nabla\rho\cdot \bu - \nabla\Delta \cV\cdot \bu)(\rho - \Delta \cV)\rho\rd{\bx} \\
= & \int \big(- \rho(\rho-\Delta \cV)  - \nabla\Delta \cV\cdot \bu\big)(\rho - \Delta \cV)\rho\rd{\bx}\\
\le & -\rho_{\min} F(t) - \int  \nabla\Delta \cV\cdot \bu(\rho - \Delta \cV)\rho\rd{\bx}.
\end{split}
\]
The second term on the right can be  bounded in terms of  the energy dissipation rate $\cD$  in \eqref{dE},
\[
\left|\int (  - \nabla\Delta \cV\cdot \bu)(\rho - \Delta \cV)\rho\rd{\bx}\right| \le \|\cV\|_{C^3}\int |\bu|\cdot|\rho - \Delta \cV|\rho\rd{\bx} \le \|\cV\|_{C^3}\Big( \frac{\rho_{min}}{2\|\cV\|_{C^3}}F + \frac{2\|\cV\|_{C^3}}{\rho_{min}}\cD \Big),
\]
and we end up with 
$\displaystyle \ddt F(t) \le -\frac{\rho_{min}}{2} F + C\cD$.
This implies that $F$ is bounded: in fact, since  $\cD=-\ddt E$ it follows that $F+C(E-E_\infty) \leq F_0+C(E_0-E_\infty)$. Hence we seek the large time behavior for  quantities $F, (E-E_\infty)$ (and likewise $R-R_\infty$ in the next step) which depending on their vanishing order $\ll 1$.
Observe  with small enough $\epsilon_1>0$ there follows 
\begin{equation}\label{dF}
\ddt \Big((E(t)-E_\infty) + \epsilon_1 F(t)\big)
\leq -\cD +\epsilon_1\left(-\frac{\rho_{min}}{2}F +C\cD\right) \leq -c(\cD+F).
\end{equation}
To close this inequality, we will need to take into account the further discrepancy between  $\supp \rho(t,\cdot)$ and $\supp \rho_\infty$.

{\bf STEP 3} --- Estimate of $R'(t)$. Recall that $R(t)$ is the radius of $\supp \rho(t,\cdot)$, \eqref{R} and assume for a moment that $R(t)\ge R_\infty$, see figure \ref{fig:config} for a typical configuration\footnote{Note that $\supp \rho_0$ and hence $\supp \rho$ need \emph{not} be  simply connected.}.

\begin{figure}[ht]
		\centering
		\includegraphics[width=.65\linewidth]{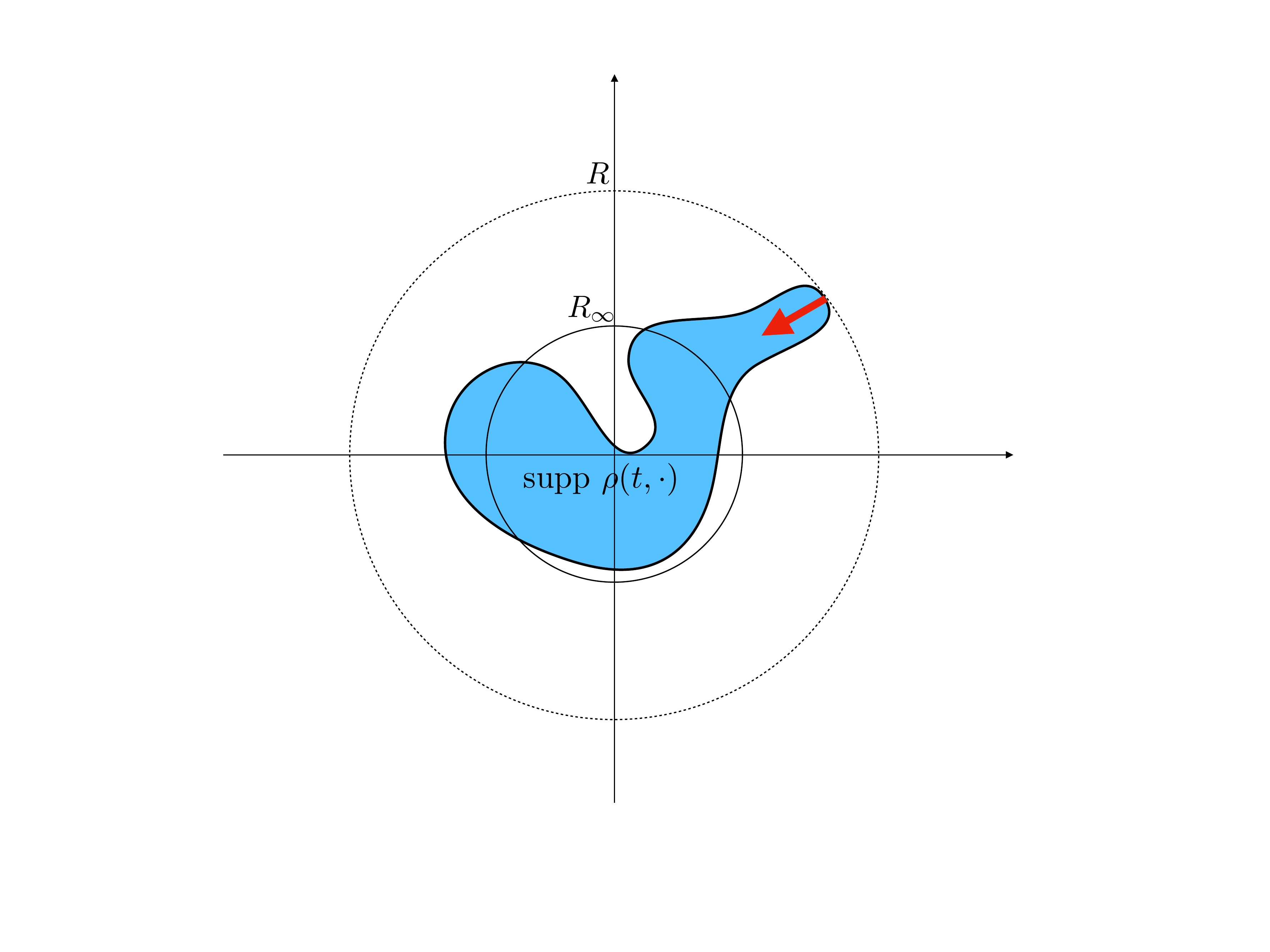}  
		\vspace*{-1.3cm}
		\caption{The support of $\rho(t,\cdot)$ inscribed in $\BR$ vs. the limiting ball $\BRinf$.}
		\label{fig:config}
\end{figure}

Fix $\bx$ on the  edge of $\supp \rho(t),  |\bx|=R$. Then
by \eqref{nablaPhiR} the velocity $\bu$ in \eqref{whatisPhi} amounts to
\[
\begin{split}
 \bu(t,\bx) = & -  \int_{|\by|\le R}  \nabla\cN(\bx-\by)\rho(t,\by)\rd{\by} -\nabla \cV(\bx) \\ 
= & - \int_{|\by|\le R} \nabla \cN(\bx-\by)(\rho(t,\by)-\Delta \cV(\by))\rd{\by} 
    - \left( \int_{|\by|\le R} \nabla \cN(\bx-\by)\Delta \cV(\by)\rd{\by} + \nabla \cV(\bx) \right)\\
    =& -\int_{|\by|\le R} \nabla \cN(\bx-\by)\big(\rho(t,\by)-\Delta \cV(\by)\big)\rd{\by}.
\end{split}
\]
We estimate   the last term  by examining  separately\footnote{Here and below we let $z_-,z_+$ denote the negative and receptively positive parts of a real $z$.}, $\bu_\pm := -\nabla \cN*\big((\rho-\Delta\cV)_\pm \rchi_{\BR}\big)$. We begin by estimating the  discrepancy from below, $(\rho-\Delta\cV)_-\rchi_{\BR}$. By Lemma \ref{lem1}, 
\begin{equation}\label{eq:uminus}
\begin{split}
\bx\cdot \bu_-(t,\bx) & = -\int_{|\by|\le R} \bx\cdot \nabla \cN(\bx-\by)\big(\rho(t,\by)-\Delta \cV(\by)\big)_{-}\rd{\by}  \\
 & \le  \frac{c}{R^{d-2}}\int_{|\by|\le R}\big(\rho(t,\by)-\Delta \cV(\by)\big)\rd{\by} \\
& =  \frac{c}{R^{d-2}}\left(\int_{|\by|\le R_\infty}\Delta \cV\rd{\by} - \int_{|\by|\le R}\Delta \cV\rd{\by}\right) \\
& =  -\frac{c}{R^{d-2}}\int_{R_\infty \le |\by| \le R}\Delta \cV \rd{\by}  \\
& \le  - \frac{c}{R^{d-2}}\frac{a}{d}(R^d-R^d_\infty )\\
& \lesssim   -R(R-R_\infty ),
\end{split}
\end{equation}
where the second equality uses the fact that $\displaystyle \int_{|\by|\le R} \rho(\by)\rd{\by} =m_0= \int_{|\by|\le R_\infty}\Delta \cV\rd{\by}$ and  the second inequality uses the lower bound $\Delta \cV\geq a$.\newline

Next, we estimate the  discrepancy from above, $g=(\rho-\Delta\cV)_+$. 
Since $\nabla \cN \in L^{d',\infty}$ then $\|\nabla\cN*g\|_{L^\infty} \lesssim \|g\|_{L^{d,1}}$. 
Recall that $g$ is uniformly bounded, supported in $\BR$ and satisfies the $L^2$ bound $\|g\|^2_{L^2}\leq 1/\rho_{min}F(t)$, so Lemma \ref{lem2} implies the existence of finite $C_d, C_p$ such that
\begin{equation}\label{eq:uplus}
\begin{split}
\frac{\bx}{R}\cdot \bu_+(t,\bx)  = -\int \frac{\bx}{R}\cdot &\nabla \cN(\bx-\by)\big(\rho(t,\by)-\Delta \cV(\by)\big)_+\rd{\by} \\
 & \leq \|\rho(t,\cdot)-\Delta \cV\|_{L^{d,1}} \leq 
 \left\{
 \begin{array}{ll} C_d\left(F(t)\right)^{1/d}, & d>2\\ \\
   C_p\left(F(t)\right)^{{p}/{4}}, & \forall p<d=2. \end{array}
   \right.
\end{split}
\end{equation}
Using the bounds \eqref{eq:uminus},\eqref{eq:uplus} we find
\[
\ddt(R(t)-R_\infty)_{+} =\sup_{|\bx|=R,\,\bx\in\supp\rho} \bu(t,\bx) \cdot \frac{\bx}{R}
\le  -c(R(t)-R_\infty)_{+} + C\big(F(t)\big)^{1/s},
\]
with $s:= \left\{\begin{array}{ll} d, & d>2\\ 
\nicefrac{4}{p}, & \forall p<d=2.\end{array} \right\}>2$.
We note on passing that since $F$ is bounded (due to \eqref{dF}), hence $(R(t)-R_\infty)_+$ remains bounded. We proceed to show its time decay.\newline
Fix  an \emph{arbitrary} $m>d$. By Young's inequality we have
\begin{equation}\label{eq:Young}
\begin{split}
\rev{\ddt(R(t)-R_\infty)^m_{+} \leq  -cm(R(t)-R_\infty)^m_{+}  +  \frac{\left(\delta m\right)^{s'}}{s'}(R(t)-R_\infty)^{(m-1)s'}_{+} + \frac{1}{s}\left(\frac{C}{\delta}\right)^sF(t).}
\end{split}
\end{equation}
Note that since  $m>d$ then $(m-1)s' >m$: indeed, when $d>2$ then $s=d$ and $d'>m'=m/(m-1)$, and  when $d=2$ then  we can always choose $p$  so that   $4/m < p< 2$ and with $s=4/p$ we then have $(m-1)(4/p)'>m$. Therefore,  choosing small enough $\delta>0$, makes   the first term on the right of \eqref{eq:Young} dominates the second for bounded $R(t)$'s, and we conclude  \rev{the existence of large enough $C_\delta>0$ depending on $(m,s,R_\infty)$, such that}
\begin{equation}\label{dR}
\begin{split}
\rev{\ddt(R(t)-R_\infty)^m_{+} \leq  -c_m(R(t)-R_\infty)^m_{+} + C_\delta F(t), \quad c_m:=\hf cm \qquad \forall m>d, \ d\geq 2.}
\end{split}
\end{equation}

{\bf STEP 4} ---  We form the Lyapunov functional, $\widetilde{E}(t)$, as  a suitable linear combination of  
 \[
 \widetilde{E}(t):=(E(t)-E_\infty) + \epsilon_1 F(t) + \epsilon_2(R(t)-R_\infty)^m_{+},
 \]
  with fixed $\epsilon_1 \gg \epsilon_2>0$ which are yet to be  chosen.
 Choosing  the corresponding combination of \eqref{dE}, \eqref{dF} and \eqref{dR}
  with small enough $\epsilon_2$ then yields, 
\begin{equation}\label{decay1}
\begin{split}
\frac{\rd}{\rd{t}}\widetilde{E}  & \le -c(\cD+F) -c_m\epsilon_2 (R-R_\infty)^m_{+} + C\epsilon_2 F \leq -\hf cF-c\epsilon_2(R-R_\infty)^m_+.
\end{split}
\end{equation}
with (re-labeled) constants $0 < c \ll 1 \ll C$ which are independent of $\epsilon_2$.\newline

{\bf STEP 5} --- Close the estimate. We aim to show that 
\begin{equation}\label{claim1}
\begin{split}
E[\rho(t)]-E_\infty \leq C_q\big((R(t)-R_\infty)^{2/q}_{+} + F(t)\big), \qquad  \left\{\begin{array}{ll} q=\frac{2d}{d+2}, & d> 2\\ \\
                           \text{any} \ q>1, & d=2. \end{array}\right.
\end{split}
\end{equation}
Combined with \eqref{decay1}, we obtain, noticing that $\alpha:=\frac{m}{2/q}>1$ and adjusting $\epsilon_2 \ll 1$ if necessary,
\[
\ddt \widetilde{E}  \le -c\widetilde{E}^{\alpha}, \qquad \alpha=\frac{mq}{2}
> \left\{\begin{array}{ll} d\frac{d}{d+2}, & d>2\\ \\
1, & d=2,\end{array}\right.
\]
which recovers  \eqref{thm_Vequi_2}, $E(t)-E_\infty \le \widetilde{E} \lesssim (1+t)^{-\gamma}$ with $\gamma=1/(\alpha-1)$. 

It remains to prove \eqref{claim1}. Let $\rho_1$ denote the discrepancy of $\rho$ from the steady state $\rho_\infty=\Delta \cV\rchi_{\BRinf}$,
\begin{equation}\label{rho1}
\rho_1 :=  \rho- \Delta \cV \rchi_{\BRinf}, \qquad \int \rho_1 \rd{\bx} = 0.
\end{equation}
Observe that $\rho_1$ is  uniformly bounded since $\Delta\cV$ and $\rho$ are, and that is supported in $\BR$; more precisely
$\rho_1 = \rho\rchi_{\BR\backslash \BRinf} -(\Delta\cV-\rho)\rchi_{\BRinf}$ hence
\rev{\begin{equation}
\begin{split}
\|\rho_1(t,\cdot)\chi_{\supp\rho}\|_{L^1} & =\int_{\BR\backslash \BRinf} \rho\rd{\bx} + \int_{\BRinf\cap\supp\rho}|\Delta \cV-\rho|\rd{\bx}
 \\
&\lesssim C\rho_{\max} (R-R_\infty) + \left(\frac{R^d_\infty}{\rho_{min}}\right)^{1/2}\left(\int|\Delta \cV-\rho|^2\rho(t,\bx)\rd{\bx}\right)^{1/2}  \\
& \lesssim (R(t)-R_\infty)_+ +F^{1/2}(t).
\end{split}
\end{equation}
This implies that 
\begin{equation}\label{rho1L1}
\|\rho_1(t,\cdot)\|_{L^1}\lesssim (R(t)-R_\infty)_+ +F^{1/2}(t),
\end{equation}
by the mean-zero property of $\rho_1$, since $\rho_1$ is nonpositive on $(\supp\rho)^c$.
}

Expressed in terms of $\rho_1$, the discrepancy of the energy is given by
\begin{equation}\label{Erhoinf}\begin{split}
E[\rho]-E_\infty = \int \Phi_\infty(\bx)\rho_1(\bx)\rd{\bx} + \frac{1}{2}\iint \cN(\bx-\by) \rho_1(\bx)\rho_1(\by)\rd{\bx}\rd{\by}.
\end{split}\end{equation}
Let us first bound the first linear term on the right of \eqref{Erhoinf}. 
Here
$\displaystyle \Phi_\infty(\bx) := \int \cN(\bx-\by)\Delta \cV(\by) \rchi_{\BRinf}(\by)\rd{\by} + \cV(\bx)$ is the total potential generated by the steady state and as before, being radial and harmonic it remains constant in $\BRinf$. Let $\Phi_\infty(R_\infty\frac{\bx}{|\bx|})$ be the radial extension of this constant throughout $\BR$: since $\rho_1$ has zero mean on $\BR$ then $\displaystyle \int_{\BR}\Phi_\infty\Big(R_\infty\frac{\bx}{|\bx|}\Big)\rho_1(\bx)\rd{\bx}=0$, and  
since $\Phi_\infty(\bx)$ is Lipschitz outside $\BRinf$   (because we assume that $\Delta \cV$ is), then  \eqref{rho1L1}  implies
\begin{equation}\label{Phiinfty}\begin{split}
\left|\int \Phi_\infty(\bx)\rho_1(\bx)\rd{\bx}\right|  &=  \left|\int_{\BR} \Big(\Phi_\infty(\bx)-\Phi_\infty\Big(R_\infty\frac{\bx}{|\bx|}\Big)\Big)\rho_1(\bx)\rd{\bx}\right| \\
 &=  \left|\int_{\BR\backslash \BRinf} \Big(\Phi_\infty(\bx)-\Phi_\infty\Big(R_\infty\frac{\bx}{|\bx|}\Big)\Big)\rho_1(\bx)\rd{\bx}\right| \\
 & \lesssim (R-R_\infty)_{+}\|\rho_1\|_{L^1} \\
  & \lesssim (R-R_\infty)^2_+  + F(t).
\end{split}
\end{equation}
To estimate the quadratic term in \eqref{Erhoinf}, we separate between the cases $d>2$ and $d=2$. For  the former, set $q=\dfrac{2d}{d+2}\in (1,2)$ and use  Hardy-Littlewood-Sobolev with $\cN\in L^{\frac{d}{d-2},\infty}$ to conclude
\begin{equation}\label{Qrhoinf}
\begin{split}
\Big|\int \cN(\bx-\by)&\rho_1(\bx)\rho_1(\by)\rd{\bx}\rd{\by}\Big| \\
& \lesssim \|\rho_1\|^2_{L^{q}}   \lesssim \left( \int_{\BR\backslash \BRinf}\rho^q\rd{\bx}\right)^{\frac{2}{q}} + \left(\int_{\BRinf}|\Delta \cV-\rho|^q\rd{\bx}\right)^{\frac{2}{q}} \\
 & \leq C\rho^2_{max}(R-R_\infty)^{\frac{2}{q}}_+ +\left(\frac{R^d_\infty}{\rho_{\min}}\right)^{\dfrac{2/q}{(2/q)'}}\int_{\BRinf}|\Delta \cV-\rho|^2\rho\rd{\bx} \\
  & \lesssim (R-R_\infty)^{\frac{2}{q}}_{+} + F, \qquad q=\frac{2d}{d+2}\in (1,2).
\end{split}
\end{equation}
For the remaining case $d=2$ we recall that  $\rho_1$ has zero mean,  hence
 the 2D embedding $\|\rho_1\|_{\dot{H}^{-1}} \leq C_q \|\rho_1\|_{L_{\it loc}^q}$ recovers \eqref{Qrhoinf} for \emph{any} $q>1$ 
\[
\Big|\int \cN(\bx-\by)\rho_1(\bx)\rho_1(\by)\rd{\bx}\rd{\by}\Big|=\|\rho_1\|_{\dot{H}^{-1}}^2 \lesssim C_q\|\rho_1\|_{L_{\it loc}^q}^2 \lesssim (R-R_\infty)^{\frac{2}{q}}_{+} + F, \qquad \forall q>1.
\]
Now \eqref{claim1} follows from \eqref{Erhoinf},\eqref{Phiinfty} and \eqref{Qrhoinf}.

\rev{Finally, to show the convergence of $\rho(t,\cdot)$ to $\rho_\infty$, we notice that the energy estimate $ \widetilde{E} \lesssim (1+t)^{-\gamma}$ implies
\begin{equation}
(R(t)-R_\infty)_+ \lesssim (1+t)^{-\gamma q/2},\quad F(t) \lesssim (1+t)^{-\gamma}
\end{equation}
where $q$ is as defined in \eqref{claim1}. Therefore \eqref{rho1L1} implies
\begin{equation}
\|\rho_1(t,\cdot)\|_{L^1} = \|\rho(t,\cdot)-\rho_\infty\|_{L^1} \lesssim  (1+t)^{-\gamma q/2} + (1+t)^{-\gamma/2} \lesssim (1+t)^{-\gamma/2}.
\end{equation}
}
\end{proof}

\section{Uniqueness of steady state for Newtonian repulsion with near-quadratic attraction}
First notice that \eqref{thm_Wuniq_1} implies that for any $r>0$,
\begin{equation}
\left|\rw'(r)\int_{|\bx|=r}\rd{S} \right| = \left|\int_{|\bx|=r} \frac{\bx}{|\bx|}\cdot \nabla w(\bx) \rd{S}\right| =  \left|\int_{|\bx|\le r} \Delta w(\bx)\rd{\bx}\right| \le \epsilon |{\mathbb B}_r|,
\end{equation}
Therefore
\begin{equation}\label{thm_Wuniq_11}
|\rw'(r)| \le \epsilon\cdot \frac{r}{d}.
\end{equation}

\begin{proof}[Proof of Theorem \ref{thm_Wuniq}]

Let $\rho_\infty$ be the global energy minimizer of $E[\rho]$ among all radially-symmetric density distributions with total mass $m_0$. Since the gradient flow \eqref{eq1} preserves the radial symmetry, $\rho_\infty$ is clearly a steady state of \eqref{eq1}.

Assume $\rho(\bx)$ is a steady state of \eqref{eq1} with total mass $m_0$ (and assume its center of mass $\int \bx\rho(\bx)\rd{\bx}=0$ without loss of generality), and we aim to show $\rho=\rho_\infty$.

Denote $R=\max_{\bx\in\supp\rho}|\bx|$ and let
\begin{equation}
\tV(\bx) = \int \cW(\bx-\by) \rho(\by)\rd{\by},\quad \tV_\infty(\bx) = \int \cW(\bx-\by) \rho_\infty(\by)\rd{\by},
\end{equation}
be the attractive potential fields generated by $\rho$ and $\rho_\infty$. Here $\tV_\infty$ is radially-symmetric because $\rho_\infty$ is. Then $\rho(\bx)$ is a steady state of \eqref{eq} with $\cV$ replaced by $\tV$, which implies
\begin{equation}\label{tV}
\rho=\Delta \tV \rchi_{\supp\rho},\quad -\int \nabla \cN(\bx-\by)\Delta \tV(\by)\rchi_{\supp\rho}(\by)\rd{\by}  -\nabla \tV(\bx) = 0,\quad \forall \bx\in\supp\rho.
\end{equation}
Similarly
\begin{equation}
\begin{split}
\rho_\infty& =\Delta \tV_\infty \rchi_{\supp\rho_\infty}: \\
 & \qquad -\int \nabla \cN(\bx-\by)\Delta \tV_\infty(\by)\rchi_{\supp\rho_\infty}(\by)\rd{\by}  -\nabla \tV_\infty(\bx) = 0,\quad \forall \bx\in\supp\rho_\infty.\end{split}
\end{equation}

 The assumptions on $\cW$ imply that
\begin{equation}
1-\epsilon \le \Delta \cW(\bx) \le 1+\epsilon,\quad \forall \bx,
\end{equation}
and therefore
\begin{equation}\label{DeltaV}
m_0(1-\epsilon) \le \Delta \tV(\bx) \le m_0(1+\epsilon), \quad m_0(1-\epsilon) \le \Delta \tV_\infty(\bx) \le m_0(1+\epsilon).
\end{equation}

Next we compute 
\begin{equation}\label{Vminus0}\begin{split}
\tV(\bx) - \tV_\infty(\bx) = & \int w(\bx-\by) \rho(\by)\rd{\by}\\
 &  - \int w(\bx-\by) \rho_\infty(\by)\rd{\by} \\
= & \int w(\bx-\by) \Delta \tV (\by) \rchi_{\supp\rho}(\by)\rd{\by} - \int w(\bx-\by) \Delta \tV_\infty(\by)\rchi_{\supp\rho_\infty}(\by)\rd{\by} \\
= & \int w(\bx-\by) (\Delta \tV (\by)-\Delta \tV_\infty(\by)) \rchi_{\supp\rho\cap \supp\rho_\infty}(\by)\rd{\by} \\
& + \int w(\bx-\by) \Delta \tV (\by) \rchi_{\supp\rho\backslash\supp\rho_\infty}(\by)\rd{\by} \\
& - \int w(\bx-\by) \Delta \tV_\infty(\by)\rchi_{\supp\rho_\infty\backslash\supp\rho}(\by)\rd{\by} \\
=: & I_1 + I_2 + I_3 .
\end{split}\end{equation}

{\bf STEP 1} --- estimate $\|\Delta \cV - \Delta \cV_\infty\|_{L^\infty}$.

We take the Laplacian of \eqref{Vminus0}:
\begin{equation}\label{Vminus}
\Delta \tV(\bx) - \Delta \tV_\infty(\bx) = \Delta I_1 + \Delta I_2 + \Delta I_3,
\end{equation}
 and estimate the three terms on the RHS.
\begin{equation}\begin{split}
|\Delta I_1| = & \left|\int \Delta w(\bx-\by) \big(\Delta \tV (\by)-\Delta \tV_\infty(\by)\big) \rchi_{\supp\rho\cap \supp\rho_\infty}(\by)\rd{\by} \right| \\
\le & \epsilon\cdot |\supp\rho_\infty|\cdot \|\Delta \cV - \Delta \cV_\infty\|_{L^\infty} , \\
\end{split}\end{equation}
by \eqref{thm_Wuniq_1}.

\begin{equation}
\begin{split}
|\Delta I_2| & = \left|\int \Delta w(\bx-\by) \Delta \tV (\by) \rchi_{\supp\rho\backslash\supp\rho_\infty}(\by)\rd{\by}\right| \\
& \le  \epsilon\cdot m_0(1+\epsilon) \cdot \Big|\supp\rho\backslash\supp\rho_\infty\Big|, 
\end{split}\end{equation}
by \eqref{thm_Wuniq_1} and \eqref{DeltaV}.

To estimate $I_3$, we first use the fact that $\rho$ and $\rho_\infty$ have the same total mass, and obtain
\begin{equation}\begin{split}
0 = & \int \rho(\bx)\rd{\bx} - \int \rho_\infty(\bx)\rd{\bx} \\
= &  \int \Delta \tV(\bx)\rchi_{\supp\rho}(\bx)\rd{\bx} - \int \Delta \tV_\infty(\bx)\rchi_{\supp\rho_\infty}(\bx)\rd{\bx} \\
= & \int ( \Delta \tV(\bx) - \Delta \tV_\infty(\bx)) \rchi_{\supp\rho\cap \supp\rho_\infty}(\bx) \rd{\bx} \\
 & \ \ +  \int \Delta \tV(\bx)\rchi_{\supp\rho\backslash\supp\rho_\infty}(\bx)\rd{\bx} - \int \Delta \tV_\infty(\bx)\rchi_{\supp\rho_\infty\backslash\supp\rho}(\bx)\rd{\bx} .\\
\end{split}\end{equation}
Therefore
\begin{equation}\begin{split}
& \left|\int \Delta \tV_\infty(\bx)\rchi_{\supp\rho_\infty\backslash\supp\rho}(\bx)\rd{\bx}\right| \\
 & \quad =  \left| \int ( \Delta \tV(\bx) - \Delta \tV_\infty(\bx)) \rchi_{\supp\rho\cap \supp\rho_\infty}(\bx) \rd{\bx} +  \int \Delta \tV(\bx)\rchi_{\supp\rho\backslash\supp\rho_\infty}(\bx)\rd{\bx} \right| \\
& \quad \le  |\supp\rho_\infty|\cdot \|\Delta \tV - \Delta \tV_\infty\|_{L^\infty} + m_0(1+\epsilon)\cdot \Big|\supp\rho\backslash\supp\rho_\infty\Big|.
\end{split}\end{equation}
This implies
\begin{equation}\begin{split}
|\Delta I_3| = & \left|\int \Delta w(\bx-\by) \Delta \tV_\infty(\by)\rchi_{\supp\rho_\infty\backslash\supp\rho}(\by)\rd{\by}\right| \\
\le &  \epsilon\cdot |\supp\rho_\infty|\cdot \|\Delta \tV - \Delta \tV_\infty\|_{L^\infty} + \epsilon\cdot m_0(1+\epsilon)\cdot \Big|\supp\rho\backslash\supp\rho_\infty\Big|.
\end{split}\end{equation}

Finally, use these in \eqref{Vminus} we conclude that 
\begin{equation}\label{VV10}
\|\Delta \tV - \Delta \tV_\infty\|_{L^\infty} \le 2\epsilon \cdot |\supp\rho_\infty|\cdot  \|\Delta \tV - \Delta \tV_\infty\|_{L^\infty} + 2\epsilon\cdot m_0(1+\epsilon)\cdot \Big|\supp\rho\backslash\supp\rho_\infty\Big|.
\end{equation}
If $\epsilon$ is small enough so that $|\supp\rho_\infty|\cdot 2\epsilon < 1$, then
\begin{equation}\label{VV1}
\|\Delta \tV - \Delta \tV_\infty\|_{L^\infty} \le \frac{2\epsilon\cdot m_0(1+\epsilon)}{1-|\supp\rho_\infty|\cdot 2\epsilon}  \cdot \Big|\supp\rho\backslash\supp\rho_\infty\Big|.
\end{equation}

As a byproduct, this shows that unless $\Delta \tV - \Delta \tV_\infty=0$ which implies the conclusion, we always have $\supp\rho\not\subset\supp\rho_\infty = \{\bx:|\bx|\le R_\infty\}$ and therefore $R> R_\infty$. Now we will show that the option $R> R_\infty$ is impossible.

{\bf STEP 2} --- use comparison principle.  Assume on the contrary that $R>R_\infty$. Taking $\nabla$ on \eqref{Vminus0} and conducting similar estimates gives
\begin{equation}\label{VV2}
\begin{split}
| \nabla \tV(\bx)-\nabla \tV_\infty(\bx)|  \le  \epsilon \cdot \frac{2R}{d} \cdot 2\Big(&|\supp\rho_\infty|\cdot \|\Delta \tV - \Delta \tV_\infty\|_{L^\infty} \\
&  +  m_0(1+\epsilon) \cdot \Big|\supp\rho\backslash\supp\rho_\infty\Big|\Big),\qquad \forall |\bx| \le R,
\end{split}
\end{equation}
using $|\nabla w(\bx-\by)| \le \epsilon\frac{|\bx-\by|}{d} \le \epsilon \cdot \frac{2R}{d}$ by \eqref{thm_Wuniq_11}.

The fact that $\Delta \tV_\infty \rchi_{|\bx|\le R}$ is a steady state of \eqref{eq} with $\tV_\infty$ implies
\begin{equation}
-\int \nabla \cN(\bx-\by)\Delta \tV_\infty(\by)\rchi_{|\by|\le R}(\by)\rd{\by}  -\nabla \tV_\infty(\bx) = 0,\quad \forall |\bx|\le R.
\end{equation}
Taking difference with \eqref{tV} and evaluating at $\bx\in\supp\rho$ with $|\bx|=R$ (such an $\bx$ exists due to the definition of $R$) gives
\begin{equation}\label{VV3}
\begin{split}
 -\int_{|\by|\le R} &\nabla \cN(\bx-\by)\big(\Delta \tV_\infty(\by) - \rho(\by)\big)_+\rd{\by} \\
& -\int_{|\by|\le R} \nabla \cN(\bx-\by)\big(\Delta \tV_\infty(\by) - \rho(\by)\big)_-\rd{\by} -\big(\nabla \tV_\infty(\bx)-\nabla \tV(\bx)\big) = 0.
\end{split}
\end{equation}
Since $\supp\rho\subset \BR$ and $\rho=\Delta \tV \rchi_{\supp\rho}$, 
\begin{equation}
|\big(\Delta \tV_\infty(\by) - \rho(\by)\big)_-| \le \|\Delta \tV_\infty-\Delta \tV\|_{L^\infty},\quad \forall |\by|\le R.
\end{equation}
Also notice that since $R\ge R_\infty$, we have $\int_{|\by|\le R}\rho(\by)\rd{\by} = m_0 = \int_{|\by|\le R_\infty}\Delta \tV_\infty(\by)\rd{\by}$, which implies
\[
\begin{split}
\int_{|\by|\le R} & \Delta \tV_\infty(\by) \rd{\by} - \int_{|\by|\le R}\rho(\by)\rd{\by} \\
& = \int_{|\by|\le R} \Delta \tV_\infty(\by) \rd{\by} - \int_{|\by|\le R_\infty}\Delta \tV_\infty(\by)\rd{\by} \ge m_0(1-\epsilon) |\{R_\infty \le |\by|\le R\}|.
\end{split}
\]
Therefore
\begin{equation}\label{VV40}
\int_{|\by|\le R} \big(\Delta \tV_\infty(\by) - \rho(\by)\big)_+\rd{\by} \ge m_0(1-\epsilon) |\{R_\infty \le |\by|\le R\}|.
\end{equation}

Take inner product of \eqref{VV3} with $\bx$. Lemma \ref{lem1} with \eqref{VV40} shows that 
\begin{equation}\label{VV4}\begin{split}
-\bx\cdot \int_{|\by|\le R} \nabla \cN(\bx-\by)\big(\Delta \tV_\infty(\by) - \rho(\by)\big)_+\rd{\by} \ge & \frac{c_d}{R^{d-2}}\cdot m_0(1-\epsilon) |\{R_\infty \le |\by|\le R\}|.
\end{split}\end{equation}
Then we estimate the other two terms in \eqref{VV3}, after taking inner product with $\bx$: 
\begin{equation}\label{VV5}\begin{split}
& \left| \bx\cdot \int_{|\by|\le R} \nabla \cN(\bx-\by)\big(\Delta \tV_\infty(\by) - \rho(\by)\big)_-\rd{\by} -\bx\cdot \big(\nabla \tV_\infty(\bx)-\nabla \tV(\bx)\big)\right| \\
\le & \|\Delta \tV - \Delta \tV_\infty\|_{L^\infty}\cdot  \int  (-\bx)\cdot\nabla \cN(\bx-\by) \rchi_{|\by|\le R}(\by)\rd{\by} + R| \nabla \tV(\bx)-\nabla \tV_\infty(\bx)| \\
\le & \|\Delta \tV - \Delta \tV_\infty\|_{L^\infty}\cdot \frac{R^2}{d} +  R\cdot \epsilon \cdot \frac{2R}{d} \cdot 2\Big(|\supp\rho_\infty|\cdot \|\Delta \tV - \Delta \tV_\infty\|_{L^\infty} \\
& +  m_0(1+\epsilon) \cdot \Big|\supp\rho\backslash\supp\rho_\infty\Big|\Big) \\
\le & 2\epsilon\cdot m_0(1+\epsilon)\frac{R^2}{d}\cdot \left(\frac{1 + 4\epsilon \cdot |\supp\rho_\infty|}{1-|\supp\rho_\infty|\cdot 2\epsilon} + \rev{2} \right)\cdot \Big|\supp\rho\backslash\supp\rho_\infty\Big| \\
\le & 2\epsilon\cdot m_0(1+\epsilon)\frac{R^2}{d}\cdot \left(\frac{1 + 4\epsilon \cdot |\supp\rho_\infty|}{1-|\supp\rho_\infty|\cdot 2\epsilon} +  \rev{2} \right)\cdot \min\{|\{R_\infty \le |\by|\le R\}|,|\supp\rho|\},
\end{split}\end{equation}
where the first inequality uses the fact that $(-\bx)\cdot\nabla \cN(\bx-\by)\ge 0$ by Lemma \ref{lem1}, the second inequality uses \eqref{VV2} and the fact that $\rchi_{|\by|\le R}$ is a steady state of \eqref{eq} with $\cV(\bx) = |\bx|^2/(2d)$, and the third inequality uses \eqref{VV1}.

If $R\le 2R_\infty$, then \eqref{VV4} and \eqref{VV5} contradict \eqref{VV3}. In fact, if $R>R_\infty$, and $\epsilon$ is small enough such that
\begin{equation}\label{eps1}
 2\epsilon\cdot \frac{1+\epsilon}{1-\epsilon}\cdot \frac{1}{d}\left(\frac{1 + 4\epsilon \cdot |\supp\rho_\infty|}{1-|\supp\rho_\infty|\cdot 2\epsilon} +  \rev{2} \right) < \frac{c_d}{(2R_\infty)^{d}},
\end{equation}
then the RHS of \eqref{VV4} is greater than that of \eqref{VV5}, which gives the contradiction.

If $R>2R_\infty$, then by the estimates
\begin{equation}
|\supp\rho| \le \frac{1}{1-\epsilon},\quad  |\{R_\infty \le |\by|\le R\}| \ge \frac{2^d-1}{2^d}|{\mathbb B}_1|\cdot R^d,\quad \forall R>2R_\infty.
\end{equation}
\eqref{VV4} and \eqref{VV5} contradict \eqref{VV3}, if $\epsilon$ is small enough such that
\begin{equation}\label{eps2}
 2\epsilon\cdot \frac{1+\epsilon}{(1-\epsilon)^2}\cdot \frac{1}{d}\left(\frac{1 + 4\epsilon \cdot |\supp\rho_\infty|}{1-|\supp\rho_\infty|\cdot 2\epsilon} +  \rev{2} \right) < c_d\frac{2^d-1}{2^d}.
 \end{equation}

Notice the estimate
\begin{equation}
|\supp\rho_\infty| \le \frac{1}{1-\epsilon},\quad R_\infty \le \frac{c_d}{(1-\epsilon)^{1/d}},
\end{equation}
which implies the smallness conditions \eqref{eps1} and \eqref{eps2} on $\epsilon$ only depend on $d$.

\end{proof}

\begin{remark}
Compared to the proof of Theorem \ref{thm_Vuniq}, the main new ingredient in the above proof is a contraction argument, which can be seen in the derivation from \eqref{VV10} to \eqref{VV1}.
\end{remark}

\section{Appendix}
\subsection{1D steady state are not unique}
In the Appendix we give a description of the steady states \eqref{eq} when $d=1$. In this case, one can write \eqref{eq} as 
\begin{equation}\label{eq1D}
\partial_t \rho +\partial_x(\rho u) = 0,\quad u(t,x) = -\int  \cN'(x-y)\rho(t,y)\rd{y}  - \cV'(x).
\end{equation}
Define $m(t,x)$ as the primitive of $\rho(t,x)$:
\[
m(t,x) := \int_{-\infty}^x \rho(t,y)\rd{y} - \frac{m_0}{2}.
\]
We have (omitting $t$-dependence)
\[
\int_{-\infty}^x \partial_y(\rho u)\rd{y} = \rho(x)u(x) = \rho(x)\Big(-\int  \cN'(x-y)\rho(y)\rd{y}  - \cV'(x)\Big),
\]
and 
\[
\begin{split}
-\int  \cN'(x-y)&\rho(y)\rd{y} =  -\int_{-\infty}^\infty \cN'(x-y)\partial_y m(y)\rd{y}\\
= & -\lim_{y\rightarrow\infty} \cN'(x-y)m(y) + \lim_{y\rightarrow-\infty} \cN'(x-y)m(y) - \int_{-\infty}^\infty \cN''(x-y)m(y)\rd{y} \\
= & -\frac{1}{2}\cdot \frac{m_0}{2} + (-\frac{1}{2})\cdot (-\frac{m_0}{2}) + m(x) = m(x) .
\end{split}
\]
Therefore, by integrating \eqref{eq1D} in $x$, we see that $m(t,x)$ satisfies
\[
\partial_t m + (m(x)-\cV'(x))\partial_x m = 0.
\]

For fixed $t$, since $m(t,x)$ is an increasing function in $x$, one can define $X(t,m)$ as its inverse function, except a countable set of values of $m$. Then $X(t,m)$, for almost all $m\in (-m_0/2,m_0/2)$, satisfies an ODE
\begin{equation}\label{Xode}
\frac{\rd}{\rd{t}}X(t,m) = m - \cV'(X).
\end{equation}
Therefore, as long as $\cV$ is super-linear:
\[
\lim_{x\rightarrow\infty} \cV'(x) = \infty, \quad \lim_{x\rightarrow-\infty} \cV'(x) = -\infty.
\]
\eqref{Xode} drives $X(t,m)$ to the equilibrium point $x$ with $\cV'(x) = m$, which lies in the same basin of attraction as the initial data $X_{in}(m)$. If $\cV$ is strictly convex, then there is a unique $x$ with $\cV'(x) = m$; otherwise there may be more than one $x$. Therefore we conclude:

\begin{proposition}

If $\cV$ is super-linear, then the solution to \eqref{eq1D} with compactly supported initial data converges to a steady state as $t\rightarrow\infty$, in the sense that $\lim_{t\rightarrow\infty} X(t,m) = X_\infty(m)$ for almost all $m\in(-m_0/2,m_0/2)$, for some $X_\infty(m)$ with $\cV'(X_\infty(m))=m$. 

If in addition, $\cV$ is strictly convex, then the steady state is unique for each fixed $m_0$; if $\cV''(x)\ge a>0,\,\forall x$, then the convergence rate of the limit $\lim_{t\rightarrow\infty} X(t,m) = X_\infty(m)$ is exponential, being uniform in $m$. 

If $\cV$ is not convex, then the steady state may fail to be unique.

\end{proposition}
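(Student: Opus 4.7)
The analysis preceding the proposition has already reduced the PDE to the family of decoupled scalar ODEs $\frac{d}{dt}X(t,m) = m - \cV'(X(t,m))$, parametrized by $m \in (-m_0/2, m_0/2)$ and starting from $X_{in}(m)$, the inverse of the centered CDF of $\rho_0$. The key observation driving everything is that each of these is a one-dimensional gradient flow for the potential $\Psi_m(X) := \cV(X) - mX$, since $\frac{d}{dt}X = -\Psi_m'(X)$. The plan is then (i) to exploit this gradient structure together with super-linearity to prove convergence, (ii) to read off uniqueness and the exponential rate from the monotonicity/convexity of $\cV'$, and (iii) to exhibit a double-well $\cV$ which breaks uniqueness.

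\textbf{Convergence under super-linearity.} Super-linearity of $\cV'$ forces $\cV(x)/|x|\to\infty$, and hence $\Psi_m(X)\to\infty$ as $|X|\to\infty$ for every fixed $m$. Thus each orbit $X(\cdot,m)$ is trapped in the sublevel set $\{\Psi_m \leq \Psi_m(X_{in}(m))\}$ and is bounded. Because the flow is scalar and autonomous, $t\mapsto X(t,m)$ is monotone (moving in the direction of $-\Psi_m'(X_{in}(m))$), and a standard one-dimensional gradient-flow/LaSalle argument yields $X(t,m)\to X_\infty(m)$ as $t\to\infty$, where $X_\infty(m)$ is the first critical point of $\Psi_m$ reached. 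In particular $\cV'(X_\infty(m))=m$ and $X_\infty(m)$ lies in the same basin of attraction as $X_{in}(m)$.

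\textbf{Uniqueness and exponential rate.} If $\cV$ is strictly convex, then $\cV'$ is strictly increasing, so the equation $\cV'(X)=m$ has at most one solution, giving uniqueness of $X_\infty(m)$ and hence of the steady state. If in addition $\cV''\geq a>0$, then with $\phi(t):=X(t,m)-X_\infty(m)$ the mean-value theorem gives $\tfrac{d}{dt}\tfrac{1}{2}\phi^2 = -\phi\bigl(\cV'(X(t,m))-\cV'(X_\infty(m))\bigr) \leq -a\phi^2$, so Gronwall yields $|X(t,m)-X_\infty(m)|\leq e^{-at}|X_{in}(m)-X_\infty(m)|$ with decay rate independent of $m$.

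\textbf{Non-uniqueness for non-convex $\cV$.} For the failure of uniqueness I would produce a concrete example: take the symmetric double-well $\cV(x)=\tfrac{1}{4}(x^2-a^2)^2$, so $\cV'(x)=x^3-a^2 x$ is super-linear but not monotone and the two stable branches of $(\cV')^{-1}$ (those with $\cV''>0$, i.e., $|x|>a/\sqrt{3}$) are genuinely distinct. Choosing $m_0$ small enough that $(-m_0/2,m_0/2)$ is covered by either branch, each branch defines a monotone map $X_\infty: m\mapsto X_\infty(m)$ solving $\cV'(X_\infty(m))=m$, and each is reached as $t\to\infty$ from initial data $X_{in}$ placed entirely in the corresponding well --- producing two distinct steady states with the same mass $m_0$. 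The main point to verify carefully in this last step, and the only real obstacle beyond the routine gradient-flow analysis of the first three steps, is the monotonicity of $X_\infty(m)$ in $m$, which follows from $\cV''>0$ on each stable branch so that the resulting $\rho_\infty$ is a legitimate probability density.
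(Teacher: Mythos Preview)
Your proposal is correct and follows essentially the same route as the paper. The paper does not give a formal proof of this proposition; the discussion immediately preceding it \emph{is} the argument --- namely, reduce to the scalar ODE $X'(t,m)=m-\cV'(X)$, observe that super-linearity forces convergence to an equilibrium $\cV'(X_\infty(m))=m$ in the same basin of attraction as $X_{in}(m)$, and read off uniqueness or its failure from (non-)convexity of $\cV$. Your write-up simply fleshes this out: the gradient-flow/coercivity framing for $\Psi_m(X)=\cV(X)-mX$, the Gronwall computation for the exponential rate under $\cV''\ge a$, and the explicit double-well example with the check that $X_\infty$ is increasing on each stable branch are all natural details the paper leaves implicit.
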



\subsection{Steady states must have compact support}\label{sec:must_be_compact}
\begin{proposition}\label{prop_cpt}
Let $d\ge 2$, and $\cV$ be a radial potential satisfying \rev{$\Delta \cV(\bx)\ge 0,\,\forall \bx\in\mathbb{R}^d$, }$\|\Delta \cV\|_{L^\infty} < \infty$ and  the condition:
\begin{equation}\label{prop_cpt_1}
V'(r) \ge c_\cV r^{-\frac{d-1}{d+1}} ,\quad \forall r\ge R_0,
\end{equation}
for some $R_0>0$, where $ c_\cV > 0$ . Then any steady state of \eqref{eq} has compact support.
\end{proposition}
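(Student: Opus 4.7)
The plan is to argue by contradiction, using a two-sided estimate of the tail mass $\epsilon(R):=\int_{|\by|>R}\rho_\infty\rd{\by}$. The setup uses the steady-state characterization \eqref{rhosuppeq} with $\cW=0$, which gives $\rho_\infty=\Delta\cV\,\rchi_{\supp\rho_\infty}$ and so $\|\rho_\infty\|_{L^\infty}\le M:=\|\Delta\cV\|_{L^\infty}$ along with $\int\rho_\infty=m_0$. Suppose toward contradiction that $\supp\rho_\infty$ is unbounded, so there exist $\bx_k\in\supp\rho_\infty$ with $r_k:=|\bx_k|\to\infty$. Taking the radial component of the force balance \eqref{rhoinfty} at $\bx_k$ yields $V'(r_k)=-\hx_k\cdot\nabla(\cN*\rho_\infty)(\bx_k)$ with $\hx_k=\bx_k/r_k$. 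Splitting the Newton integral at $|\by|=r_k/2$, the inner mass contributes $O(m_0/r_k^{d-1})$ (since $|\bx_k-\by|\ge r_k/2$ there), while the outer mass, using $\rho_\infty\le M$ together with the Hardy--Littlewood rearrangement about $\bx_k$ (which concentrates the outer mass $\epsilon(r_k/2)$ at maximal density $M$ in a ball about $\bx_k$), contributes at most $CM^{(d-1)/d}\epsilon(r_k/2)^{1/d}$. Since $r_k^{-(d-1)}=o(r_k^{-(d-1)/(d+1)})$, combining with the hypothesis \eqref{prop_cpt_1} forces the \emph{tail lower bound}
\[
\epsilon(r_k/2)\ \gtrsim\ r_k^{-d(d-1)/(d+1)}.
\]

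Next I would derive a complementary \emph{iterated tail upper bound} via a moment identity. Since $\rho_\infty\bu\equiv 0$ on $\mathbb{R}^d$, testing with the vector field $\nabla(|\bx|^{2\alpha+2}/(2\alpha+2))=|\bx|^{2\alpha}\bx$ for $\alpha\ge 0$ yields
\[
\int r^{2\alpha+1}V'(r)\rho_\infty\rd{\bx}\ =\ -\iint|\bx|^{2\alpha}\bx\cdot\nabla\cN(\bx-\by)\,\rho_\infty(\bx)\rho_\infty(\by)\rd{\bx}\rd{\by}.
\]
Symmetrizing $\bx\leftrightarrow\by$ and using the monotonicity $(|\bx|^{2\alpha}\bx-|\by|^{2\alpha}\by)\cdot(\bx-\by)\ge 0$ (gradient of a convex function), the right-hand side is nonnegative and admits the pointwise upper bound $C(|\bx|^{2\alpha}+|\by|^{2\alpha})/|\bx-\by|^{d-2}$. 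Integrating against $\rho_\infty(\bx)\rho_\infty(\by)$ and exploiting the boundedness $\|\cN*\rho_\infty\|_{L^\infty}\le C(M,m_0)$ (a standard $L^1\cap L^\infty$ splitting, valid for $d\ge 3$), the right-hand side is at most $C\int r^{2\alpha}\rho_\infty\rd{\bx}$; combined with $V'\ge c_\cV r^{-(d-1)/(d+1)}$ this gives the bootstrap
\[
\int r^{2\alpha+2/(d+1)}\rho_\infty\rd{\bx}\ \le\ C\int r^{2\alpha}\rho_\infty\rd{\bx}.
\]
Iterating from $\alpha=0$ (where the right-hand side is $m_0$) produces $\int r^\gamma\rho_\infty\rd{\bx}<\infty$ for every $\gamma\ge 0$, whence $\epsilon(R)\le C_\gamma R^{-\gamma}$ for every $\gamma>0$.

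Closing the proof is then immediate: pick any $\gamma>d(d-1)/(d+1)$ and compare the two estimates at $R=r_k/2$ to get
\[
c\,r_k^{-d(d-1)/(d+1)}\ \le\ \epsilon(r_k/2)\ \le\ C_\gamma(r_k/2)^{-\gamma},
\]
which forces $r_k^{\gamma-d(d-1)/(d+1)}\le C'$ and contradicts $r_k\to\infty$. The main obstacles are (i)~rigorously justifying the moment identity and its symmetrization, which requires truncation arguments (mollifying the kernel or using compactly supported test functions and then passing to the limit using the a priori $L^\infty$-bounds on $\rho_\infty$ and $\cN*\rho_\infty$); and (ii)~the case $d=2$, where $\cN*\rho_\infty$ is not globally bounded and the estimate $\|\cN*\rho_\infty\|_{L^\infty}\le C(M,m_0)$ fails -- one remedy is to use a logarithmically weighted bound and absorb the weight into the moment on the left, or else treat $d=2$ separately. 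The tight matching between the exponent $(d-1)/(d+1)$ in the hypothesis and the exponent $d(d-1)/(d+1)$ produced by the rearrangement is exactly where the argument becomes sharp, explaining the gap, alluded to in the remark, between this tail assumption and the bare Pareto condition \eqref{eq:pareto}.
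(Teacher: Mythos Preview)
Your argument is correct and takes a genuinely different route from the paper's proof. The paper works pointwise at a single far-out $\bx$ with $|\bx|=R$, decomposes the Newtonian integral into three geometric pieces --- the ball $\BR$, a thin tangent slab $\{R-\delta\le y_1\le R\}$, and the residual region $S$ --- and optimizes the slab width $\delta$ so that the two error pieces balance at order $\epsilon_R^{2/(d+1)}R^{2/(d+1)}$, which is beaten by the main term $-cR^{2/(d+1)}$ coming from $V'(R)R$ combined with Lemma~\ref{lem1}. Your approach instead pairs a \emph{lower} tail bound $\epsilon(r_k/2)\gtrsim r_k^{-d(d-1)/(d+1)}$ (force balance plus bathtub rearrangement) with an \emph{upper} tail bound $\epsilon(R)\le C_\gamma R^{-\gamma}$ for all $\gamma$ (a virial-type moment bootstrap). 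The paper's decomposition is more self-contained and single-scale, and it makes transparent exactly how the borderline exponent $(d-1)/(d+1)$ arises from the $\delta$-optimization; your route requires an iteration but yields, as a dividend, finiteness of \emph{all} polynomial moments of $\rho_\infty$, which is extra information the paper does not extract.

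One small comment: your worry about $d=2$ is unnecessary in the moment step. After symmetrization the pointwise bound reads $(|\bx|^{2\alpha}\bx-|\by|^{2\alpha}\by)\cdot(\bx-\by)\le C_\alpha(|\bx|^{2\alpha}+|\by|^{2\alpha})|\bx-\by|^2$, so for $d=2$ the factor $|\bx-\by|^{-(d-2)}$ disappears and you only need $\int\rho_\infty=m_0$, not $\|\cN*\rho_\infty\|_{L^\infty}$; the bootstrap is in fact easier, not harder, in two dimensions. The genuine technical point you flag --- justifying the moment identity by truncation before you know the moments are finite --- is real but routine.
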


\begin{proof}
Let $\rho=\Delta \cV \chi_{\supp\rho}$ be a steady state, and take $R>0$. We aim to prove that when $R$ is large enough, then $\supp\rho \cap \{|\bx|=R\}=\emptyset$. In the rest of the proof, we denote
\begin{equation}\label{epsilonR}
\epsilon_R = \int_{|\bx|>R} \rho(\bx)\rd{\bx},\quad \text{satisfying }\lim_{R\rightarrow\infty} \epsilon_R = 0.
\end{equation}

Suppose the contrary, then we take $\bx \in\supp\rho \cap \{|\bx|=R\}$, and we may assume $\bx = (R,0,\dots,0)^T$ without loss of generality. The steady state equation \eqref{rhoinfty} implies
\[
-\int \nabla\cN(\bx-\by)\rho(\by)\rd{\by} - \nabla \cV(\bx) = 0.
\]
Taking inner product with $\bx$ gives
\begin{equation}\label{force30}
-\int \bx\cdot\nabla\cN(\bx-\by)\rho(\by)\rd{\by} - V'(R)R = 0.
\end{equation}

We aim to show that the LHS is negative which leads to a contradiction. We first write
\begin{equation}\label{force3}\begin{split}
 -\int \bx\cdot\nabla\cN(\bx-\by)\rho(\by)\rd{\by} & =   c\int \frac{\bx\cdot (\bx-\by)}{|\bx-\by|^d} \rho(\by)\rd{\by} \le c\int_{y_1\le R}  \frac{\bx\cdot(\bx-\by)}{|\bx-\by|^d} \rho(\by)\rd{\by} \\
& \le  -\int_{|\by|\le R} \bx\cdot\nabla\cN(\bx-\by) \rho(\by)\rd{\by} \\
 & \quad + c\int_{R-\delta \le y_1 \le R}  \frac{\bx\cdot(\bx-\by)}{|\bx-\by|^d} \rho(\by)\rd{\by} \\
  & \quad + c\int_{S}  \frac{\bx\cdot(\bx-\by)}{|\bx-\by|^d} \rho(\by)\rd{\by} ,
\end{split}\end{equation}
where $y_1$ denotes the first component of $\by$,  $\delta>0$ is small, to be determined, and
\begin{equation}
S := \{\by: y_1\le R\} \backslash \Big( \BR \cup \{\by:R-\delta \le y_1 \le R\} \Big).
\end{equation}
Now we estimate the three terms on the RHS of \eqref{force3} separately:

{\bf The first term} (combined with the term $V'(R)R$ in \eqref{force30}). Similar to STEP 3 of the proof Theorem \ref{thm_Vequi}, we \rev{use the assumption $\Delta \cV\ge 0$ and} write
\[
\begin{split}
-\int_{|\by|\le R} \bx\cdot\nabla\cN(\bx-\by) \rho(\by)\rd{\by} - V'(R)R = & \int_{\BR\backslash \supp\rho} \bx\cdot\nabla\cN(\bx-\by) \Delta \cV(\by)\rd{\by} \\ \le & -\frac{c}{R^{d-2}}\int_{\BR\backslash \supp\rho}  \Delta \cV(\by)\rd{\by}.
\end{split}
\]
Notice that by the assumption \eqref{prop_cpt_1},
\[
\int_{|\by|\le R}  \Delta \cV(\by)\rd{\by} = \int_{|\by|=R}\nabla \cV(\by)\cdot \vec{n} \rd{S(\by)} = cR^{d-1}V'(R) \ge cR^{d-1-\frac{d-1}{d+1}},
\]
for $R$ sufficiently large, and
\[
 \int_{\supp\rho}\Delta \cV(\by)\rd{\by} = m_0.
\]
Therefore, since $d-1-\frac{d-1}{d+1}>0$, we get
\[
\begin{split}
-\int_{|\by|\le R} \bx\cdot\nabla\cN(\bx-\by) \rho(\by)\rd{\by} - V'(R)R  \le & -\frac{c}{R^{d-2}}\cdot R^{d-1-\frac{d-1}{d+1}} = -cR^{\frac{2}{d+1}}.
\end{split}
\]

{\bf The second term}. One can show that for fixed $y_1<R$, writing $\by = (y_1,\by'),\,\by'\in\mathbb{R}^{d-1}$,
\[
\frac{\bx}{|\bx|}\cdot\int_{\mathbb{R}^{d-1}}\frac{(\bx-\by)}{|\bx-\by|^d} \rd{\by'} = C,
\]
is independent of $y_1$. In fact,
\[
\begin{split}
\frac{\bx}{|\bx|}\cdot\int_{\mathbb{R}^{d-1}}\frac{(\bx-\by)}{|\bx-\by|^d} \rd{\by'} & = \int_{\mathbb{R}^{d-1}} \frac{R-y_1}{((R-y_1)^2+(\by')^2)^{d/2}}\rd{\by'}\\
 & = \int_{\mathbb{R}^{d-1}} \frac{1}{(1+(\by')^2)^{d/2}}\rd{\by'} = C.
 \end{split}
\]

 Therefore, using the assumption $\|\Delta \cV\|_{L^\infty} < \infty$, we get
\[
\int_{R-\delta \le y_1 \le R}  \frac{\bx\cdot(\bx-\by)}{|\bx-\by|^d} \rho(\by)\rd{\by} \le CR\int_{R-\delta \le y_1 \le R} \frac{\bx}{|\bx|}\cdot\int_{\mathbb{R}^{d-1}} \frac{(\bx-\by)}{|\bx-\by|^d}\rd{\by'}\rd{y_1} \le C\delta R.
\]

{\bf The third term}.
We claim that
\begin{equation}\label{claimS}
|\bx-\by| \ge \sqrt{\delta R},\quad \forall \by\in S.
\end{equation}
For those $\by$ with $y_1<0$, this is clear because $|\bx-\by| \ge R$ in this case. For those $\by=(y_1,\by')$ with $y_1\ge 0$, notice that
\[
|\bx-\by|^2 = (R-y_1)^2 + |\by'|^2 \ge |\by'|^2 = |\by|^2 - y_1^2.
\]
By the definition of $S$, we have $|\by|^2 \ge R^2$ and $y_1^2 \le (R-\delta)^2$. Therefore
\[
|\bx-\by|^2 \ge R^2 - (R-\delta)^2 = 2\delta R - \delta^2 \ge \delta R,
\]
using the smallness of $\delta$. This proves the claim.

Using \eqref{claimS}, we get
\[
\frac{|\bx\cdot(\bx-\by)|}{|\bx-\by|^d} \le R\cdot \frac{1}{|\bx-\by|^{d-1}} \le R\cdot (\delta R)^{-(d-1)/2} = \delta^{-(d-1)/2}R^{-(d-3)/2},
\]
which together with the assumption $\|\Delta \cV\|_{L^\infty} < \infty$, gives the estimate
\[
\int_{S}  \frac{\bx\cdot(\bx-\by)}{|\bx-\by|^d} \rho(\by)\rd{\by} \le C\epsilon_R \delta^{-(d-1)/2}R^{-(d-3)/2},
\]
using the fact that $S\cap \BR = \emptyset$.

Now we take
\[
\delta = \epsilon_R^{2/(d+1)} R^{-(d-1)/(d+1)},
\]
to equate the second and third terms, and finally obtain the estimate
\[
\begin{split}
0 \le & -\int_{|\by|\le R} \bx\cdot\nabla\cN(\bx-\by) \rho(\by)\rd{\by}- V'(R)R + c\int_{R-\delta \le y_1 \le R}  \frac{\bx\cdot(\bx-\by)}{|\bx-\by|^d} \rho(\by)\rd{\by} \\
& + c\int_{S}  \frac{\bx\cdot(\bx-\by)}{|\bx-\by|^d} \rho(\by)\rd{\by} \le   -cR^{2/(d+1)} + C\epsilon_R^{2/(d+1)} R^{2/(d+1)}.
\end{split}
\]
This gives the desired contradiction for large enough $R$, in view of \eqref{epsilonR}.

\end{proof}

\end{document}